\theoremstyle{definition}
\newtheorem{definition}{Definition}[section]
\theoremstyle{definition}
\theoremstyle{definition}
\newtheorem{example}[definition]{Example}
\theoremstyle{definition}
\newtheorem{notation}[definition]{Notation}
\theoremstyle{definition}
\theoremstyle{remark}
\newtheorem*{remark}{Remark}
\theoremstyle{plain}
\newtheorem{theorem}[definition]{Theorem}
\theoremstyle{plain}
\newtheorem{stheorem}[definition]{Structure Theorem}
\theoremstyle{plain}
\newtheorem{lemma}[definition]{Lemma}
\theoremstyle{plain}
\newtheorem{proposition}[definition]{Proposition}
\theoremstyle{plain}
\theoremstyle{plain}
\newtheorem{corollary}[definition]{Corollary}
\newcommand{\M}[2]{\mathcal{M}_{#1}^{#2}}
\newcommand{\I}[3]{\mathcal{I}_{#1}^{#2}(#3)}
\newcommand{\Q}[3]{\mathcal{Q}_{#1}^{#2}(#3)}
\newcommand{\iI}[3]{\overline{\mathcal{I}}_{#1}^{#2}(#3)}
\newcommand{\iQ}[3]{\overline{\mathcal{Q}}_{#1}^{#2}(#3)}
\newcommand{\Sc}[3]{\mathcal{S}_{#1}^{#2}(#3)}
\newcommand{\Tc}[3]{\mathcal{T}_{#1}^{#2}(#3)}
\newcommand{\Z}{\mathbb{Z}}
\newcommand{\Zo}{\mathbb{Z}_{\ge0}}
\newcommand{\lexl}{\succ_{\text{lex}}}
\newcommand{\lexleq}{\succeq_{\text{lex}}}
\newcommand{\ith}[1]{#1^{\text{th}}}
\DeclareMathOperator{\im}{im}
\DeclareMathOperator{\ct}{ct}
\DeclareMathOperator{\ft}{ft}
\begin{document}

\title{A Duality between Hilbert Functions of Lex Ideals and Quotients}

\author{Reid Buchanan}
\address{Department of Mathematics, Oklahoma State University, 401 Mathematical Sciences, Stillwater, OK 74078}
\email{rebucha@okstate.edu}

\keywords{Hilbert functions, Macaulay coefficients, Macaulay representation, lex ideals} 
\subjclass[2020]{13F55, 05E40, 13D40, 13F20}
\thanks{\textbf{Acknowledgements:} The author would like to thank Alessandra Costantini, Hoai Dao, Christopher Francisco, Jeffrey Mermin, Anand Patel, and Jay Schweig for several helpful conversations regarding the content of the paper.}

\begin{abstract}
We study the Macaulay coefficients induced by the ideal and quotient segments of a degree-$\delta$ monomial in $n$ variables.  We give explicit formulas for these coefficients and establish a duality between the two theories.  Our main result is that the ideal and quotient coefficients form a set partition of $\{0,1,\ldots,n+\delta-2\}$.
\end{abstract}

\maketitle

\section{Introduction}
Let $S=k[x_1,x_2,\ldots,x_n]$ and $M$ be a graded module over $S$.  Of particular interest is the rate in which the Hilbert function of $M$ grows (the Hilbert function of $M$ encodes the degree $\delta$ components: $\operatorname{Hilb}_M(\delta)=\dim M_\delta$).  Lex ideals allow us to give a bound for this growth that may be expressed in terms of binomial coefficients.  In particular, if $L$ is a lex ideal such that $\dim(S/L)_\delta=\dim(S/M)_\delta$ for some degree $\delta$, then $\dim(S/S_1L)_{\delta+1}\ge\dim(S/M)_{\delta+1}$.  There is a dual result for ideals: if $\dim L_\delta=\dim M_\delta$, then $\dim S_1L_\delta\le\dim M_{\delta+1}$.  The expressions $\dim(S/S_1L)_{\delta+1}$ and $\dim S_1L_\delta$ can be explicitly computed via a decomposition of the defining spaces into monomial spaces.  For any positive integers $s$ and $p$, a standard combinatorial result guarantees that $s$ has a unique Macaulay representation

\[s=\sum_{i=1}^p\binom{s_i}{i}\]

\noindent for some unique integers $s_1<s_2<\cdots<s_p$.  The Macaulay representation bounds the growth of the Hilbert function.

\begin{theorem}\label{t:introquot}
    Let $I$ be a graded ideal and fix $\delta$.  Write the $\ith{\delta}$ Macaulay representation of $\dim(S/I)_\delta$

    \[\dim(S/I)_\delta=\sum_{i=1}^{\delta}\binom{t_i}{i}\]

    \noindent as above.  Then

    \[\dim(S/I)_{\delta+1}\le\sum_{i=1}^{\delta}\binom{t_i+1}{i+1}.\]
\end{theorem}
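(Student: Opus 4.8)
The plan is to reduce the statement to a purely combinatorial identity about lex segments of monomials, and then to prove that identity by induction on $n+\delta$. For the first step I would invoke the growth bound for lex ideals recalled in the introduction. Let $L$ be the lex ideal with $\dim(S/L)_\delta=\dim(S/I)_\delta$, that is, the ideal generated by the lex segment $L_\delta\subseteq S_\delta$ of the appropriate cardinality. The quoted inequality gives $\dim(S/I)_{\delta+1}\le\dim(S/S_1L)_{\delta+1}=\dim S_{\delta+1}-\dim S_1L_\delta$, so it suffices to prove the identity $(\star)$: whenever $L_\delta$ is a lex segment of degree $\delta$ in $n$ variables with $\dim(S_\delta/L_\delta)=h=\sum_{i=1}^{\delta}\binom{t_i}{i}$, one has $\dim S_{\delta+1}-\dim S_1L_\delta=\sum_{i=1}^{\delta}\binom{t_i+1}{i+1}$. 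Concretely, if $Q=S_\delta\setminus L_\delta$ denotes the set of the $h$ lex-smallest degree-$\delta$ monomials, then the left-hand side of $(\star)$ counts those degree-$(\delta+1)$ monomials all of whose degree-$\delta$ divisors lie in $Q$.

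To prove $(\star)$, first dispose of the degenerate cases: if $h=0$ then both sides vanish (note $\binom{t_i}{i}=0$ forces $\binom{t_i+1}{i+1}=0$), and if $h=\dim S_\delta=\binom{\delta+n-1}{\delta}$ then $L_\delta=0$ and both sides equal $\dim S_{\delta+1}$, the top Macaulay index of $h$ being $\delta+n-1$. Otherwise $0<h<\dim S_\delta$, which forces $n\ge 2$, and I would induct on $n+\delta$. Put $r=\dim k[x_2,\dots,x_n]_\delta=\binom{\delta+n-2}{n-2}$ and sort the degree-$\delta$ monomials by the exponent of $x_1$; the $r$ lex-smallest of them are exactly the monomials avoiding $x_1$. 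If $h\le r$, then $Q\subseteq k[x_2,\dots,x_n]_\delta$ and $x_1S_{\delta-1}\subseteq L_\delta$, so $x_1S_\delta\subseteq S_1L_\delta$, and a short bookkeeping shows that $(\star)$ reduces verbatim to $(\star)$ for the lex segment $L_\delta\cap k[x_2,\dots,x_n]_\delta$ in the $n-1$ variables $x_2,\dots,x_n$ (same $h$, same $\delta$), which holds by the inductive hypothesis since the $\delta$-th Macaulay representation of $h$ does not depend on the ambient ring. If $h>r$, then every degree-$\delta$ monomial avoiding $x_1$ lies in $Q$, so $L_\delta=x_1N$ for a lex segment $N\subseteq S_{\delta-1}$ with $\dim(S_{\delta-1}/N)=h-r$; then $S_1L_\delta=x_1(S_1N)$, whence $\dim S_{\delta+1}-\dim S_1L_\delta=\binom{\delta+n-1}{n-2}+(\dim S_\delta-\dim S_1N)=\binom{\delta+n-1}{n-2}+\sum_{i=1}^{\delta-1}\binom{u_i+1}{i+1}$, where $h-r=\sum_{i=1}^{\delta-1}\binom{u_i}{i}$ and the last step is the inductive hypothesis in degree $\delta-1$. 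Since $r<h<\dim S_\delta$, the top Macaulay index of $h$ is exactly $t_\delta=\delta+n-2$, so $\binom{t_\delta}{\delta}=\binom{\delta+n-2}{\delta}=r$; subtracting the top term and using uniqueness of Macaulay representations gives $u_i=t_i$ for $i<\delta$, while $\binom{\delta+n-1}{n-2}=\binom{\delta+n-1}{\delta+1}=\binom{t_\delta+1}{\delta+1}$. Adding these produces $\sum_{i=1}^{\delta}\binom{t_i+1}{i+1}$, closing the induction.

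I expect the main obstacle to be the bookkeeping in the case $h>r$: one must check that the $(\delta-1)$-th Macaulay representation of $h-r$ is obtained from the $\delta$-th Macaulay representation of $h$ by deleting the top term, which is where one genuinely uses Pascal's identity together with the fact that $r<h<\dim S_\delta$ pins $t_\delta$ to its maximal admissible value $\delta+n-2$. The auxiliary claims invoked along the way — that $S_1L_\delta$ is $S_1$ times a lex segment, that the $r$ lex-smallest degree-$\delta$ monomials are precisely those avoiding $x_1$, and that the $\delta$-th Macaulay representation of an integer is independent of $n$ — are routine, but they should be stated carefully enough to license both inductive reductions (to $n-1$ variables and to degree $\delta-1$) and the corresponding appeals to the inductive hypothesis. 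In the framework developed earlier in the paper, these reductions are presumably packaged more slickly via the explicit formulas for the quotient coefficients, which would let one read off $\sum_{i=1}^{\delta}\binom{t_i+1}{i+1}$ directly from the quotient segment of the boundary monomial of $S_1L_\delta$.
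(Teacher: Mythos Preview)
Your argument is correct. Both you and the paper reduce to the lex case via Macaulay's theorem and the growth bound from the introduction, and then compute $\dim(S/S_1L)_{\delta+1}$ exactly; the difference lies in how that computation is organized. The paper works structurally: it decomposes the quotient segment as $\Q{n}{\delta}{m}\cong\bigoplus_{i=1}^{\delta}\M{[\gamma_i,n]}{i}$ (Theorem~\ref{t:longdecomp}), identifies $t_i=n-\gamma_i+i$ (Corollary~\ref{c:decompdimen}), and then shows in Corollary~\ref{c:longdecompmult}(2) that $(S/L)_{\delta+1}\cong\bigoplus_{i=1}^{\delta}\M{[\gamma_i,n]}{i+1}$, so that each summand's dimension becomes $\binom{t_i+1}{i+1}$. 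Your induction on $n+\delta$ carries out the same recursion at the level of dimensions, peeling off one variable (case $h\le r$) or one degree (case $h>r$) per step; this amounts to unfolding the single splitting $\mathcal{Q}=x_{\gamma-1}\mathcal{R}\oplus\M{[\gamma,n]}{\delta}$ of Proposition~\ref{p:lexdecompspec}(2) into $\gamma-1$ variable reductions followed by one degree reduction. The structural version pays off downstream in the paper, feeding directly into Structure Theorem~\ref{st:strongdecomp} and the duality of Theorem~\ref{t:setpart}; your version is more self-contained and avoids the auxiliary monomial spaces, at the cost of tracking the Macaulay coefficients by hand through the case split---precisely the bookkeeping you anticipate in your final paragraph.
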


\begin{theorem}\label{t:introideal}
    Let $I$ be a graded ideal and fix $n$.  Write the $\ith{(n-1)}$ Macaulay representation of $\dim I_\delta$

    \[\dim I_\delta=\sum_{i=1}^{n-1}\binom{s_i}{i}\]

    \noindent as above.  Then

    \[\dim I_{\delta+1}\ge\sum_{i=1}^{n-1}\binom{s_i+1}{i}.\]
\end{theorem}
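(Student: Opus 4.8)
The plan is to prove the inequality by reducing it to an \emph{equality} for one extremal ideal, and then to establish that equality combinatorially. Put $m=\dim I_\delta$, let $\Lambda\subseteq S_\delta$ be the lex segment consisting of the $m$ lex-largest monomials of degree $\delta$, and let $L=(\Lambda)$ be the lex ideal it generates, so $\dim L_\delta=m=\dim I_\delta$. Since $I$ is an ideal, $S_1I_\delta\subseteq I_{\delta+1}$; and by Macaulay's theorem the lex segment has the smallest shadow, i.e. $\dim(S_1V)\ge\dim(S_1\Lambda)$ for every $m$-dimensional $V\subseteq S_\delta$. Combining these,
\[\dim I_{\delta+1}\ \ge\ \dim(S_1 I_\delta)\ \ge\ \dim(S_1\Lambda)\ =\ \dim L_{\delta+1},\]
the last equality because $L$ is generated in degree $\delta$, so $L_{\delta+1}$ is spanned by the monomial set $S_1\Lambda$. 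Hence it suffices to prove $|S_1\Lambda|=\sum_{i=1}^{n-1}\binom{s_i+1}{i}$ whenever $m=|\Lambda|=\sum_{i=1}^{n-1}\binom{s_i}{i}$ is the $\ith{(n-1)}$ Macaulay representation.

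For the shadow $S_1\Lambda$ there is a clean description: for a degree-$(\delta+1)$ monomial $u$, the lex-largest of its monomial predecessors $u/x_j$ (over $x_j\mid u$) is $u/x_{\ft(u)}$, where $\ft(u)$ is the largest index of a variable dividing $u$; since $\Lambda$ is closed upward under $\lexl$, we get $u\in S_1\Lambda$ iff $u/x_{\ft(u)}\in\Lambda$, and counting the fibres of $u\mapsto u/x_{\ft(u)}$ (the fibre over $v$ has size $n+1-\ft(v)$) gives $|S_1\Lambda|=\sum_{r=1}^n|\Lambda\cap k[x_1,\dots,x_r]_\delta|$, each intersection being a lex segment in $r$ variables. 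I would nonetheless prove the identity above directly, by induction on $n$. Let $j_0$ be largest with $\binom{j_0+n-2}{n-1}\le m$; splitting $\Lambda$ by $x_1$-degree shows $\Lambda$ consists of all degree-$\delta$ monomials whose $x_1$-exponent exceeds $\delta-j_0$ (there are $\binom{j_0+n-2}{n-1}$ of them) together with $x_1^{\delta-j_0}\Lambda'$, where $\Lambda'$ is the lex segment of size $m'=m-\binom{j_0+n-2}{n-1}$ in $k[x_2,\dots,x_n]_{j_0}$. A direct check shows $S_1\Lambda$ is the disjoint union of all degree-$(\delta+1)$ monomials of $x_1$-exponent exceeding $\delta-j_0$ (there are $\binom{j_0+n-1}{n-1}$ of these) and $x_1^{\delta-j_0}\langle x_2,\dots,x_n\rangle\Lambda'$, whence $|S_1\Lambda|=\binom{j_0+n-1}{n-1}+|\langle x_2,\dots,x_n\rangle\Lambda'|$.

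By the inductive hypothesis in $n-1$ variables, $|\langle x_2,\dots,x_n\rangle\Lambda'|=\sum\binom{s_i'+1}{i}$ where $m'=\sum\binom{s_i'}{i}$ is the $\ith{(n-2)}$ Macaulay representation of $m'$. Since $m'<\binom{j_0+n-2}{n-2}$ (by maximality of $j_0$) one checks $s_i'<j_0+n-2$ for each occurring $i$, so $\binom{j_0+n-2}{n-1}$ is a legal leading term above all the $\binom{s_i'}{i}$, and prepending it yields exactly the $\ith{(n-1)}$ Macaulay representation of $m$. Applying $\binom{a}{i}\mapsto\binom{a+1}{i}$ termwise then turns $\binom{j_0+n-1}{n-1}+\sum\binom{s_i'+1}{i}$ into $\sum_{i=1}^{n-1}\binom{s_i+1}{i}$, as required; the base case $n=2$ is immediate, since the lex segment of size $m$ in two variables has shadow of size $m+1=\binom{m+1}{1}$.

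The delicate point is the bookkeeping with the $\ith{(n-1)}$ Macaulay representation: it must be the top-anchored one (leading term $\binom{s_{n-1}}{n-1}$, built greedily downward from index $n-1$ and terminated once the remainder vanishes), and the operation $\binom{s_i}{i}\mapsto\binom{s_i+1}{i}$ must be applied only to the terms that actually occur---applying it to an absent (zero) lower term would break the equality. Everything else (the fibre count, the $x_1$-degree splitting of the shadow, the inequality $s_i'<j_0+n-2$) is routine. A second route, closer to the duality in the title, deduces the statement from Theorem~\ref{t:introquot} by complementation: from $\dim I_\delta=\binom{n+\delta-1}{n-1}-\dim(S/I)_\delta$ one shows that the $\ith{(n-1)}$ Macaulay representation of $\dim I_\delta$ and the $\ith{\delta}$ Macaulay representation of $\dim(S/I)_\delta$ determine one another, with $\binom{t_i}{i}\mapsto\binom{t_i+1}{i+1}$ corresponding to $\binom{s_i}{i}\mapsto\binom{s_i+1}{i}$; subtracting from $\binom{n+\delta}{n-1}$ then converts the quotient bound of Theorem~\ref{t:introquot} into the asserted ideal bound.
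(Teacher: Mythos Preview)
Your proof is correct and takes essentially the same approach as the paper: reduce to the lex case via Macaulay's theorem, then split the lex segment by $x_1$-degree and induct on $n$ (your splitting $\Lambda=\{x_1\text{-exp}>\delta-j_0\}\sqcup x_1^{\delta-j_0}\Lambda'$ is exactly Proposition~\ref{p:lexdecompspec}(1), and your computation of $S_1\Lambda$ is the content of Corollary~\ref{c:longdecompmult}(1)). The paper packages the induction structurally as a direct-sum decomposition into monomial spaces (Theorem~\ref{t:longdecomp}) and only takes dimensions at the end, whereas you carry the numerics throughout; your closing remark on the duality route anticipates Theorem~\ref{t:setpart}.
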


The proof for these bounds uses Macaulay's theorem, which states that lex ideals in $S$ attain all possible Hilbert functions of ideals in $S$. \cite{Macaulay}  For a proof of Macaulay's theorem, see \cite{Peeva}*{Chapter 45}.  Analogues of Macaulay's theorem exist for more general settings (see \cite{Kruskal}, \cite{Katona}, \cite{ClementsLindstrom}, \cite{CooperRoberts}, and \cite{CavigliaKummini}), whose lex ideals have similar combinatorial characteristics of Hilbert function growth.

There is an apparent duality between Theorems \ref{t:introquot} and \ref{t:introideal}: the setting of the quotient depends on the degree and the ideal setting instead depends on the number of variables.  We explore this respective dependency in more detail and bring to attention a number of parallel properties both settings have.  The proofs of the combinatorial bounds in Theorem \ref{t:introquot} and (especially) Theorem \ref{t:introideal} are largely attributed to folklore.  We give efficient proofs of both, as well as a way to determine their Macaulay coefficients directly from a monomial defining the corresponding lex ideal $L$ rather than having to compute the numerical value of $\operatorname{Hilb}_L(\delta)$.

Throughout, fix positive integers $n,\delta$ and a monomial $m$.  The ideal segment of such a monomial $m$ is the vector space spanned by the monomials preceding $m$ in the lex order, while the quotient segment is the corresponding space spanned by those following $m$.  Given $m$, we decompose its ideal and quotient segments, which allows us to construct the Macaulay representations of the segments' dimensions.  Along the way, we obtain explicit expressions for the complete decompositions based on $m$ (Structure Theorem \ref{st:strongdecomp}).  For both segments, we also give explicit formulas for the Macaulay coefficients in terms of given information directly from $m$.  The main result, Theorem \ref{t:setpart}, shows that the Macaulay coefficients for the ideal and quotient segments form a set partition of $\{0,1,\ldots,n+\delta-2\}$.

The paper is structured as follows.  In Section 2, we establish the necessary notation and background content.  In Section 3, we decompose lex ideals and quotients as direct sums of simpler monomial spaces and use this decomposition to show how the lex ideals and quotients grow.  In Section 4, we show how to read off the Macaulay coefficients from a generating monomial and use this characterization to prove the main result.

\section{Background and Notation}

\begin{notation}\label{n:monosp}
Let $\M{n}{\delta}$ be the space generated by all degree-$\delta$ monomials in $n$ variables.  We shall also let $\M{[r,s]}{\delta}$ be the set generated by all monomials in the variables $x_r,x_{r+1},\ldots,x_s$, which will be used in some of the proofs.  We note the natural identification between $\M{[r,s]}{\delta}$ and $\M{s-r+1}{\delta}$, which will be understood without comment in these proofs.
\end{notation}

\begin{notation}
We may write any degree-$\delta$ monomial $m=x_1^{\alpha_1}\cdots x_n^{\alpha_n}$ as $m=x_{j_1}x_{j_2}\cdots x_{j_\delta}$, for indices $1\le j_1\le j_2\le\cdots\le j_\delta\le n$.  We call this the standard factorization of $m$.
\end{notation}

\begin{example}
If $m=x_2^3x_4x_5^2$, then $m=x_2x_2x_2x_4x_5x_5$, so that $j_1=j_2=j_3=2$, $j_4=4$, and $j_5=j_6=5$.
\end{example}

\begin{definition}
Let $m=x_1^{\alpha_1}\cdots x_n^{\alpha_n}=x_{j_1}\cdots x_{j_\delta}$ be a monomial.  Define $\min(m):=\min(\{i\ |\ x_i\text{ divides }m\})=j_1$.  We will call this the \emph{minimum of} $m$.

Likewise, define the \emph{maximum of $m$} to be $\max(m):=\max(\{i\ |\ x_i\text{ divides }m\})=j_\delta$.  Our focus will require the minimum operation much more than the maximum.
\end{definition}

\begin{definition}
For monomials $m=x_1^{\alpha_1}x_2^{\alpha_2}\cdots x_n^{\alpha_n}$ and $m'=x_1^{\beta_1}x_2^{\beta_2}\cdots x_n^{\beta_n}$, we say that $m\lexl m'$ if and only if $\alpha_i>\beta_i$ for the smallest value of $i$ such that $\alpha_i\ne\beta_i$.  This defines a monomial order in the indeterminants $x_1,\ldots,x_n$.  We call this the \emph{lexicographical} (or \emph{lex}) \emph{order} on the set of monomials.  If $m\lexl m'$, we say that $m$ is \emph{lex-larger than} $m'$ and $m'$ is \emph{lex-smaller than} $m$.
\end{definition}

\begin{example}
If $S=k[a,b,c]$, then the ordering of all degree-$3$ monomials lexicographically is as follows:
\[a^3\lexl a^2b\lexl a^2c\lexl ab^2\lexl abc\lexl ac^2\lexl b^3\lexl b^2c\lexl bc^2\lexl c^3.\]
\end{example}

\begin{remark}
Although it is counterintuitive that $x_i\lexl x_j$ when $i<j$, this notation is standard in the literature.
\end{remark}
  
\begin{definition}
Let $m$ be a degree-$\delta$ monomial in $n$ variables.  Define $\I{n}{\delta}{m}:=\left(\mu\in\M{n}{\delta}\ \middle|\ \mu\lexl m\right)$, where the inequality is strict.  We will call this the \emph{(exclusive) ideal segment of $m$}.  Similarly, let $\iI{n}{\delta}{m}:=\left(\mu\in\M{n}{\delta}\ \middle|\ \mu\lexleq m\right)$ be the \emph{inclusive ideal segment of $m$}.  The unmodified phrase ``ideal segment'' will refer to the exclusive segment.  We shall also adopt the convention introduced in Notation \ref{n:monosp} in proofs: $\I{[r,s]}{\delta}{m}:=\left(\mu\in\M{[r,s]}{\delta}\ \middle|\ \mu\lexl m\right)$ and its counterpart $\iI{[r,s]}{\delta}{m}:=\left(\mu\in\M{[r,s]}{\delta}\ \middle|\ \mu\lexleq m\right)$.
\end{definition}

\begin{definition}
An \emph{(initial) lex segment} is an inclusive or exclusive ideal segment of some monomial.  A \emph{lex ideal} is a graded ideal whose degree-$\delta$ components are lex segments.
\end{definition}

\begin{definition}
Suppose $m\in\M{n}{\delta}$ such that $m\ne x_1^{\delta}$.  The \emph{predecessor of $m$} is the lex-smallest monomial generator in $\I{n}{\delta}{m}$.  That is, if $\mu$ is the predecessor of $m$, then $\I{n}{\delta}{m}=\iI{n}{\delta}{\mu}$.
\end{definition}

\begin{lemma}\label{l:monopred}
Let $m=x_1^{\alpha_1}\cdots x_n^{\alpha_n}\in\M{n}{\delta}$ and suppose $m\ne x_1^\delta$.  Write $m=wx_{\max(m)}^\gamma$, where $\gamma=\alpha_{\max(m)}$ and $w\in\M{[1,\max(m)-1]}{\delta-\gamma}$.  Then the predecessor of $m$ is $wx_{\max(m)-1}x_n^{\gamma-1}$.
\end{lemma}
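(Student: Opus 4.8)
The plan is to identify, directly and combinatorially, the lex-smallest monomial that is still lex-larger than $m$. Write $m = w x_{\max(m)}^{\gamma}$ as in the statement, with $w \in \M{[1,\max(m)-1]}{\delta-\gamma}$ and $\gamma = \alpha_{\max(m)} \ge 1$. Set $q := \max(m)$ for brevity. I would argue in two directions: first that the candidate $\mu := w x_{q-1} x_n^{\gamma-1}$ satisfies $m \lexl \mu$, and second that there is no monomial strictly between $\mu$ and $m$ in the lex order, i.e.\ every $\nu \in \M{n}{\delta}$ with $m \lexl \nu$ also satisfies $\mu \lexleq \nu$. Together these give $\I{n}{\delta}{m} = \iI{n}{\delta}{\mu}$, which is exactly the assertion that $\mu$ is the predecessor of $m$.

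For the first direction: $m = w x_q^{\gamma}$ and $\mu = w x_{q-1} x_n^{\gamma-1}$ share the common factor $w$, which involves only variables $x_1,\ldots,x_{q-1}$; so the exponent vectors of $m$ and $\mu$ agree in coordinates $1,\ldots,q-2$, and at coordinate $q-1$ the exponent of $\mu$ is one larger than that of $m$ (which is $0$ there, since $q-1$ does not divide $w$ by the definition of $\max$, and $m$ has no other factors below $x_q$). Hence $m \lexl \mu$ by definition of the lex order. Note $\mu$ indeed has degree $(\delta - \gamma) + 1 + (\gamma - 1) = \delta$, and the case $\gamma = 1$ degenerates correctly to $\mu = w x_{q-1}$.

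For the second direction — the main obstacle — suppose $\nu = x_1^{\beta_1}\cdots x_n^{\beta_n}$ with $m \lexl \nu$; I must show $\mu \lexleq \nu$. Let $k$ be the smallest index where the exponent vectors of $m$ and $\nu$ differ; then $\alpha_k > \beta_k$, and in particular $\beta_i = \alpha_i$ for $i < k$. Since $m$ and $\mu$ agree below $x_{q-1}$, and $\mu$ has a strictly larger exponent than $m$ at $x_{q-1}$, I would split on whether $k \le q-1$. If $k \le q-2$, then $\nu$ differs from $m$ before $\mu$ does, in the same decreasing direction, so $\mu \lexl \nu$ comparing at index $k$. If $k = q-1$: then $\beta_i = \alpha_i = (\text{exponent of } w)_i$ for all $i \le q-2$, so $\nu$ has the factor $w$ in its low variables, leaving total degree $\gamma$ to distribute among $x_{q-1},\ldots,x_n$; since $\beta_{q-1} < \alpha_{q-1} = 0$ is impossible, we must actually have $k \ge q-1$, so in fact $\beta_i = \alpha_i$ for all $i \le q-1$ forces $k \ge q$ — contradicting $m \lexl \nu$ unless... here I need to be careful: the correct dichotomy is that the first place $m$ can be "beaten" from below while still leaving a degree-$\delta$ monomial is at $x_{q-1}$, and among all monomials agreeing with $w$ on $x_1,\ldots,x_{q-1}$ \emph{except} with a positive exponent at $x_{q-1}$, putting exponent exactly $1$ at $x_{q-1}$ and pushing all remaining degree $\gamma-1$ onto $x_n$ is lex-smallest. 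I would formalize this last comparison by noting that once $\beta_{q-1} \ge 1 = (\text{exponent of }\mu)_{q-1}$: if $\beta_{q-1} > 1$ then $\nu \lexl \mu$ fails in the right direction at $x_{q-1}$ giving $\mu \lexl \nu$ wait — $\beta_{q-1} > 1$ means $\nu$ has larger exponent, so $\nu \lexl \mu$; but we also need $m \lexl \nu$, and such $\nu$ could still be lex-between. So instead I would invoke the identification $\M{[q-1,n]}{\gamma} \cong \M{n-q+2}{\gamma}$ and reduce to the statement that $x_{q-1}x_n^{\gamma-1}$ is the predecessor of $x_q^{\gamma}$ within $\M{[q-1,n]}{\gamma}$, i.e.\ that no monomial of degree $\gamma$ in $x_{q-1},\ldots,x_n$ lies strictly between them; this is the genuine combinatorial core, and it follows because any such monomial divisible by $x_{q-1}$ and lex-larger than $x_q^\gamma$ must have $x_{q-1}$-exponent at least $1$, and the lex-smallest with $x_{q-1}$-exponent exactly $1$ is obtained by maximizing the index of every other factor, namely $x_{q-1}x_n^{\gamma-1}$; while any monomial with $x_{q-1}$-exponent $0$ is a degree-$\gamma$ monomial in $x_q,\ldots,x_n$, hence lex-smaller-or-equal to $x_q^\gamma$, hence not in the open segment. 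Reassembling with the factor $w$ then yields $\mu \lexleq \nu$ in all cases, completing the proof.
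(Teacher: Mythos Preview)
The paper states this lemma without proof, so there is nothing to compare against; I will simply assess your argument.

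Your overall strategy---show $\mu\in\I{n}{\delta}{m}$ and then show $\mu$ is lex-minimal there---is the natural one. But there is a genuine error: you assert that the exponent of $x_{q-1}$ in $m$ is zero, writing ``$q-1$ does not divide $w$ by the definition of $\max$.'' This is false. The condition $\max(m)=q$ says only that $x_i\nmid m$ for $i>q$; it places no constraint on $x_{q-1}$, and indeed $w\in\M{[1,q-1]}{\delta-\gamma}$ may well involve $x_{q-1}$ (take $m=x_2x_3$, $q=3$, $w=x_2$). Your case analysis in the middle (``$\beta_{q-1}<\alpha_{q-1}=0$ is impossible'') and the framing of your reduction to $\M{[q-1,n]}{\gamma}$ both lean on this false claim, which is why the argument visibly derails before you restart.

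The fix is small but must be made explicit. In the case where $\nu$ agrees with $m$ on indices $1,\dots,q-2$, the relation $\nu\lexl m$ forces $\beta_{q-1}>\alpha_{q-1}$, because the remaining tail $x_q^{\gamma}$ of $m$ is already lex-maximal among degree-$\gamma$ monomials in $x_q,\dots,x_n$; hence in particular $w\mid\nu$. Dividing through by $w$ then legitimately lands you in $\M{[q-1,n]}{\gamma}$ comparing $\nu/w$ to $x_q^{\gamma}$, and your concluding paragraph correctly identifies $x_{q-1}x_n^{\gamma-1}$ as the predecessor of $x_q^{\gamma}$ there. So the route is sound once this divisibility step is inserted and the bogus $\alpha_{q-1}=0$ claim is removed.

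A smaller point: throughout you write $m\lexl\mu$, $m\lexl\nu$, $\mu\lexleq\nu$, but in the paper's convention $a\lexl b$ means $a$ is lex-\emph{larger} than $b$. Your inequality symbols are systematically reversed relative to the paper, even though your verbal reasoning tracks the correct relations.
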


\begin{proposition}\label{p:idealmult}
Write $S=\bigoplus_iS_i$ and let $\mathcal{I}\subseteq\M{n}{\delta}$ be a lex segment.  Then $S_{1}\mathcal{I}\subseteq\M{n}{\delta+1}$ is also a lex segment.  More precisely, $S_{1}\I{n}{\delta}{m}=\I{n}{\delta+1}{mx_{\max(m)}}$ and $S_{1}\iI{n}{\delta}{m}=\iI{n}{\delta+1}{mx_n}$.
\end{proposition}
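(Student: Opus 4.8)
The plan is to establish the two displayed equalities directly; since a lex segment is, by definition, an inclusive or exclusive ideal segment of a monomial, this simultaneously proves that $S_1\mathcal{I}$ is again a lex segment. I would handle the inclusive identity $S_1\iI{n}{\delta}{m}=\iI{n}{\delta+1}{mx_n}$ first, and then deduce the exclusive one from it by passing to the predecessor of $m$ and invoking Lemma~\ref{l:monopred}. As a preliminary I would record the elementary fact that, among monomials of a fixed degree, $\mu'\lexleq m'$ holds if and only if the weakly increasing index sequence of the standard factorization of $\mu'$ is lexicographically at most that of $m'$ (smaller sequence corresponds to lex-larger monomial); this follows by unwinding the definition of $\lexl$.

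For the inclusion $\subseteq$ in the inclusive case: if $\mu\lexleq m$ and $x_j$ is any variable, then $j\le n$ gives $x_j\lexleq x_n$, and since the lex order is a monomial order (hence compatible with multiplication) we get $x_j\mu\lexleq x_n\mu\lexleq mx_n$; thus $x_j\mu\in\iI{n}{\delta+1}{mx_n}$. For the reverse inclusion it suffices to show that every monomial $\nu\in\M{n}{\delta+1}$ with $\nu\lexleq mx_n$ factors as $\nu=x_t\mu$ with $\mu\lexleq m$. I would take $t=\max(\nu)$, so that $\mu:=\nu/x_t$ has degree $\delta$ and its standard factorization is that of $\nu$ with the final (largest) factor removed, while the standard factorization of $mx_n$ is that of $m$ with the largest possible variable $x_n$ appended. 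Comparing the two index sequences via the preliminary fact, $\nu\lexleq mx_n$ is decided entirely by the first $\delta$ entries --- i.e.\ by $\mu$ against $m$ --- unless those entries coincide, in which case $\mu=m$ and the appended $x_n$ poses no obstruction since every index is $\le n$. In every case $\mu\lexleq m$, which completes the reverse inclusion.

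For the exclusive identity I may assume $m\ne x_1^\delta$, since otherwise both $\I{n}{\delta}{m}$ and $\I{n}{\delta+1}{mx_{\max(m)}}$ are $0$. Letting $m'$ denote the predecessor of $m$, we have $\I{n}{\delta}{m}=\iI{n}{\delta}{m'}$, so the inclusive identity gives $S_1\I{n}{\delta}{m}=\iI{n}{\delta+1}{m'x_n}$, and it remains to check that $m'x_n$ is the predecessor of $mx_{\max(m)}$. Write $m=wx_e^\gamma$ with $e=\max(m)$, $\gamma=\alpha_e\ge1$, and $w\in\M{[1,e-1]}{\delta-\gamma}$. Lemma~\ref{l:monopred} gives $m'=wx_{e-1}x_n^{\gamma-1}$, hence $m'x_n=wx_{e-1}x_n^{\gamma}$. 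On the other hand $mx_{\max(m)}=wx_e^{\gamma+1}$ still has maximum $e$ and $w\in\M{[1,e-1]}{\delta-\gamma}$, so Lemma~\ref{l:monopred} applied to it gives its predecessor as $wx_{e-1}x_n^{\gamma}$ as well. Therefore $\iI{n}{\delta+1}{m'x_n}=\I{n}{\delta+1}{mx_{\max(m)}}$ by the definition of the predecessor, which finishes the proof.

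I expect the reverse inclusion in the inclusive case to be the only real obstacle: one cannot obtain it by formally dividing the inequality $\nu\lexleq mx_n$ by a variable (division is not order-compatible, and $x_n$ need not even divide a general $\nu$), so the argument must use that the removed variable is specifically $x_{\max(\nu)}$ --- the last factor of the standard factorization --- in order to align with the factorization of $mx_n$. If one wishes to avoid the standard-factorization description of the lex order, the same step can instead be carried out by a short case analysis on the position of the first coordinate at which the exponent vectors of $m$ and $\nu/x_{\max(\nu)}$ disagree, relative to $\max(\nu)$, where the single awkward case is eliminated by a degree count.
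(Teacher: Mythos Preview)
Your proof is correct and follows essentially the same route as the paper: establish the inclusive identity first, then deduce the exclusive one by passing to the predecessor and applying Lemma~\ref{l:monopred}. The only cosmetic difference is in the reverse inclusion of the inclusive case, where the paper removes $x_{\max(\nu)}$ and argues by contradiction via multiplication compatibility, while you remove the same variable and compare the standard-factorization index sequences directly; the underlying idea is identical.
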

\begin{proof}
First, we'll prove $S_{1}\iI{n}{\delta}{m}=\iI{n}{\delta+1}{mx_n}$.  The lex-smallest monomial generator of $S_{1}\iI{n}{\delta}{m}$ is $mx_n$.  It suffices now to prove that this is a lex segment.  Let $m'x_{\max(m')}\lexleq mx_n$ for some $m'\in\M{n}{\delta}$.

We claim $m'\lexleq m$.  Suppose $m'\prec_{\text{lex}}m$.  Then $m'x_{\max(m')}\prec_{\text{lex}}mx_{\max(m')}\preceq_{\text{lex}}mx_n$, which proves the claim.

Thus, $m'\in\iI{n}{\delta}{m}$, so $m'x_{\max(m')}\in S_1\iI{n}{\delta}{m}$.  Therefore, $S_1\iI{n}{\delta}{m}=\iI{n}{\delta+1}{mx_n}$.

Now we will prove $S_1\I{n}{\delta}{m}=\I{n}{\delta+1}{mx_{\max(m)}}$.  Suppose $\mu$ is the predecessor of $m$.  Then $S_1\I{n}{\delta}{m}=S_1\iI{n}{\delta}{\mu}=\iI{n}{\delta+1}{\mu x_n}$.  It suffices to prove that $\mu x_n$ is the predecessor of $mx_{\max(m)}$.  Write $m=wx_{\max(m)}^\gamma$, with $\gamma$ maximal, so that $\mu=wx_{\max(m)-1}x_n^{\gamma-1}$ by Lemma \ref{l:monopred}.  Then $mx_{\max(m)}=wx_{\max(m)}^{\gamma+1}$ and $\mu x_n=wx_{\max(m)-1}x_n^\gamma$, which is the predecessor of $mx_{\max(m)}$ as desired.  Therefore, $S_1\I{n}{\delta}{m}=\iI{n}{\delta+1}{\mu x_n}=\I{n}{\delta+1}{mx_{\max(m)}}$.
\end{proof}

Macaulay's theorem motivates the study of lex ideals by classifying all possible Hilbert functions.

\begin{definition}
Let $M=\bigoplus_{i\in\Z}M_i$ be a graded module.  The \emph{Hilbert function of $M$}, $\operatorname{Hilb}_M:\Z\to\Zo$, is given by $\operatorname{Hilb}_M(i)=\dim M_i$.
\end{definition}

\begin{theorem}[Macaulay's theorem]
Let $\mathcal{J}$ be a graded ideal of $S$.  Then there exists a lex ideal $\mathcal{I}$ of $S$ such that $\mathcal{I}$ and $\mathcal{J}$ have the same Hilbert function.
\end{theorem}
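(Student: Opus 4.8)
The plan is the classical degree-by-degree construction of the lex ideal, coupled with a sharp combinatorial estimate for how fast a lex segment grows under multiplication by $S_1$; essentially all of the difficulty lies in that estimate, which is precisely what forces the construction to land inside an ideal.

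First I would reduce to the case that $\mathcal{J}$ is a monomial ideal. Fixing any global monomial order (lex itself works), the initial ideal $\operatorname{in}(\mathcal{J})$ is monomial, and for each $d$, choosing a homogeneous $k$-basis of $\mathcal{J}_d$ in echelon form with respect to the order, its leading monomials are distinct and form a $k$-basis of $\operatorname{in}(\mathcal{J})_d$, whence $\dim\operatorname{in}(\mathcal{J})_d=\dim\mathcal{J}_d$; so $\operatorname{Hilb}_{\operatorname{in}(\mathcal{J})}=\operatorname{Hilb}_{\mathcal{J}}$. Thus we may assume $\mathcal{J}$ is a monomial ideal, and in particular that each $\mathcal{J}_d$ is a monomial subspace of $\M{n}{d}$.

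Next I would write down the candidate: for each $d$, let $\mathcal{I}_d\subseteq\M{n}{d}$ be the lex segment with $\dim\mathcal{I}_d=\dim\mathcal{J}_d$ --- this exists and is unique because the lex segments of $\M{n}{d}$ are totally ordered by inclusion and realize every dimension from $0$ to $\dim\M{n}{d}$ --- and set $\mathcal{I}=\bigoplus_d\mathcal{I}_d$. By construction $\operatorname{Hilb}_{\mathcal{I}}=\operatorname{Hilb}_{\mathcal{J}}$, so the only thing left is to check that $\mathcal{I}$ is an ideal, i.e.\ that $S_1\mathcal{I}_d\subseteq\mathcal{I}_{d+1}$ for every $d$. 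By Proposition \ref{p:idealmult}, $S_1\mathcal{I}_d$ is itself a lex segment; since $\mathcal{I}_{d+1}$ is a lex segment as well and any two lex segments of $\M{n}{d+1}$ are nested, this containment is equivalent to the dimension inequality $\dim S_1\mathcal{I}_d\le\dim\mathcal{I}_{d+1}=\dim\mathcal{J}_{d+1}$. Since $S_1\mathcal{J}_d\subseteq\mathcal{J}_{d+1}$ already gives $\dim S_1\mathcal{J}_d\le\dim\mathcal{J}_{d+1}$, the entire theorem reduces to the following \emph{Macaulay growth bound}: for every monomial subspace $V\subseteq\M{n}{d}$, the lex segment $L\subseteq\M{n}{d}$ with $\dim L=\dim V$ satisfies $\dim S_1L\le\dim S_1V$.

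Proving this growth bound is the main obstacle, and I would run it by induction on the number of variables $n$ (the case $n=1$ being immediate). For a monomial subspace $W\subseteq\M{n}{e}$, decompose it by the power of $x_1$ as $W=\bigoplus_{i\ge0}x_1^iW^{(i)}$ with each $W^{(i)}\subseteq\M{[2,n]}{e-i}$ a monomial subspace (its ``$x_1$-layers''); then multiplication by $S_1$ mixes consecutive layers according to $\dim S_1W=\sum_{i\ge0}\dim\bigl(W^{(i-1)}+S'_1W^{(i)}\bigr)$, where $S'=k[x_2,\ldots,x_n]$, the $i$-th summand is formed inside $\M{[2,n]}{e+1-i}$, and $W^{(-1)}:=0$. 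The layers of a lex segment have a very rigid shape: there is a threshold $a$ with $L^{(i)}=0$ for $i<a$, with $L^{(a)}$ a lex segment of $\M{[2,n]}{d-a}$, and with $L^{(i)}=\M{[2,n]}{d-i}$ (the full space) for $i>a$. One then transforms an arbitrary $V$ into $L$ by two ``compression'' moves, each chosen so that the mixing sum $\sum_i\dim\bigl(V^{(i-1)}+S'_1V^{(i)}\bigr)$ does not increase: (i) replace each layer $V^{(i)}$ by the lex segment of $\M{[2,n]}{d-i}$ of the same dimension, using the inductive hypothesis to control each $\dim S'_1V^{(i)}$; and (ii) with all layers now lex segments, move dimension from a layer to the one above it (toward higher powers of $x_1$) until the vector $(\dim V^{(i)})_i$ coincides with that of $L$. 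The technical heart --- and where I expect essentially all the real work to be --- is checking that moves (i) and (ii) genuinely leave the mixing sum non-increasing; both reduce to estimating $\dim(A+S'_1B)$ for nested lex segments $A,B$ in $n-1$ variables, for which one uses the explicit description of lex segments and of $S'_1$ applied to them (the $(n-1)$-variable case of Proposition \ref{p:idealmult}). Everything else is formal bookkeeping; this is, in essence, the argument in \cite{Peeva}*{Chapter 45}.
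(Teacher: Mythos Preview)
The paper does not actually prove Macaulay's theorem: it states the result and immediately refers the reader to \cite{Peeva}*{Chapter 45} for a proof. Your outline is precisely the compression argument found there (reduction to monomial ideals via initial ideals, degree-by-degree lex replacement, and the growth bound by induction on $n$ using $x_1$-layer decompositions), so there is nothing to compare against and your sketch is consistent with the reference the paper invokes.

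One small remark on your step (i): the reason lexifying each layer does not increase the mixing sum is not just the inductive bound $\dim S'_1L^{(i)}\le\dim S'_1V^{(i)}$, but also that after lexifying both $L^{(i-1)}$ and $S'_1L^{(i)}$ are lex segments in $\M{[2,n]}{d+1-i}$ and hence nested, so $\dim(L^{(i-1)}+S'_1L^{(i)})=\max(\dim L^{(i-1)},\dim S'_1L^{(i)})$, which is then dominated by $\dim(V^{(i-1)}+S'_1V^{(i)})$. You gesture at this with ``nested lex segments $A,B$'', but it is worth making explicit, since without it the inequality for the sum does not follow from the inequality for the summands.
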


There are several approaches to prove Macaulay's theorem, some of which are found in \cite{Peeva}*{Chapter 45}.

Macaulay's theorem holds for quotients by graded ideals as well, allowing us to study the Hilbert functions of quotients in a very similar way.  The theory of quotient segments is dual to the theory of ideal segments.

\begin{notation}
Let $m$ be a degree-$\delta$ monomial in $n$ variables.  Define $\Q{n}{\delta}{m}:=\left(\mu\in\M{n}{\delta}\ \middle|\ m\lexl\mu\right)$, where the inequality is strict.  We will call this the \emph{(exclusive) quotient segment of $m$}.  Similarly, let $\iQ{n}{\delta}{m}:=\left(\mu\in\M{n}{\delta}\ \middle|\ m\lexleq\mu\right)$ be the \emph{inclusive quotient segment of $m$}.  The unmodified phrase ``quotient segment'' will refer to the exclusive segment.  We shall also adopt the convention introduced in Notation \ref{n:monosp} in proofs: $\Q{[r,s]}{\delta}{m}:=\left(\mu\in\M{[r,s]}{\delta}\ \middle|\ m\lexl\mu\right)$ and its counterpart $\iQ{[r,s]}{\delta}{m}:=\left(\mu\in\M{[r,s]}{\delta}\ \middle|\ m\lexleq\mu\right)$.
\end{notation}

\begin{lemma}\label{l:reducequoseg}
Let $m\in\M{[i,n]}{\delta}$ where $i<n$ is an index such that $x_i\nmid m$.  Then $\Q{[i,n]}{\delta}{m}=\Q{[i+1,n]}{\delta}{m}$.
\end{lemma}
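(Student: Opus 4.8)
The plan is to prove the two inclusions separately; the inclusion $\Q{[i+1,n]}{\delta}{m}\subseteq\Q{[i,n]}{\delta}{m}$ is essentially formal, and the hypothesis $x_i\nmid m$ is needed only for the reverse inclusion. First I would note that $\M{[i+1,n]}{\delta}\subseteq\M{[i,n]}{\delta}$, since every monomial in $x_{i+1},\ldots,x_n$ is in particular a monomial in $x_i,\ldots,x_n$ (here $i<n$ guarantees both ranges are nonempty), and that $m\in\M{[i+1,n]}{\delta}$ as well, because $x_i\nmid m$. Consequently the defining condition $m\lexl\mu$ is literally the same requirement in either ambient space, so each generating monomial of $\Q{[i+1,n]}{\delta}{m}$ is a generating monomial of $\Q{[i,n]}{\delta}{m}$, which gives the first inclusion.

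For the reverse inclusion, I would take an arbitrary generating monomial $\mu$ of $\Q{[i,n]}{\delta}{m}$ and show $x_i\nmid\mu$; it then follows that $\mu\in\M{[i+1,n]}{\delta}$, so $\mu$ also generates $\Q{[i+1,n]}{\delta}{m}$. Write $m=x_i^{\alpha_i}x_{i+1}^{\alpha_{i+1}}\cdots x_n^{\alpha_n}$ and $\mu=x_i^{\beta_i}x_{i+1}^{\beta_{i+1}}\cdots x_n^{\beta_n}$; by hypothesis $\alpha_i=0$. If $\beta_i\ge1$, then $i$ is the smallest index at which the exponent vectors of $m$ and $\mu$ (indexed from $i$ to $n$) differ, and at that index $\beta_i>\alpha_i$, so by the definition of the lex order $\mu\lexl m$, contradicting $m\lexl\mu$. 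Hence $\beta_i=0$, as needed.

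I do not anticipate any genuine obstacle: the argument is just a careful unwinding of the definitions. The one point that requires attention is keeping the (deliberately counterintuitive) convention straight — the variable of smallest index is the lex-largest — so that a monomial divisible by $x_i$ is forced to be lex-\emph{larger} than any monomial not involving $x_i$, which is exactly what excludes it from the quotient segment. Since the quotient segment uses the strict inequality $m\lexl\mu$, the case $\mu=m$ never arises and needs no separate treatment.
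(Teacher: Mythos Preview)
Your proof is correct. The paper states this lemma without proof, treating it as an elementary consequence of the definitions; your direct verification of the two inclusions is precisely the natural argument one would supply.
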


\begin{definition}
A \emph{terminal lex segment} is an inclusive or exclusive quotient segment of some monomial.
\end{definition}

We also get a counterpart to Proposition \ref{p:idealmult}:

\begin{proposition}\label{p:quotientmult}
Suppose $\mathcal{I}\subseteq\M{n}{\delta}$ is a lex segment.  Then, setting $L$ equal to the ideal generated by $\mathcal{I}$, we have:
  \begin{enumerate}
  \item If $(S/L)_\delta=\Q{n}{\delta}{m}$, then $(S/S_1L)_{\delta+1}=\Q{n}{\delta+1}{mx_{n}}$.
    \item If $(S/L)_\delta=\iQ{n}{\delta}{m}$, then  $(S/S_1L)_{\delta+1}=\iQ{n}{\delta+1}{mx_{\max(m)}}$.
  \end{enumerate}
\end{proposition}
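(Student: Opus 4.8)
The plan is to dualize Proposition \ref{p:idealmult} by passing to complements inside $\M{n}{\delta}$ and $\M{n}{\delta+1}$. Since $L$ is the monomial ideal generated by the degree-$\delta$ monomial space $\mathcal{I}$, we have $L_\delta=\mathcal{I}$, hence $(S_1L)_{\delta+1}=S_1L_\delta=S_1\mathcal{I}$, and $(S/L)_\delta$ has a monomial basis consisting of exactly the degree-$\delta$ monomials not in $\mathcal{I}$. Thus, as subsets of $\M{n}{\delta}$, $\mathcal{I}\sqcup(S/L)_\delta=\M{n}{\delta}$, and similarly $S_1\mathcal{I}\sqcup(S/S_1L)_{\delta+1}=\M{n}{\delta+1}$.

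The key elementary observation I would record first is that within $\M{n}{\delta}$ the complement of the exclusive ideal segment $\I{n}{\delta}{m}$ is the inclusive quotient segment $\iQ{n}{\delta}{m}$, while the complement of the inclusive ideal segment $\iI{n}{\delta}{m}$ is the exclusive quotient segment $\Q{n}{\delta}{m}$; this is immediate from the definitions, since a degree-$\delta$ monomial $\mu$ violates $\mu\lexl m$ precisely when $m\lexleq\mu$, and violates $\mu\lexleq m$ precisely when $m\lexl\mu$. For part (1), the hypothesis $(S/L)_\delta=\Q{n}{\delta}{m}$ then forces $\mathcal{I}=\iI{n}{\delta}{m}$; applying Proposition \ref{p:idealmult} gives $(S_1L)_{\delta+1}=S_1\iI{n}{\delta}{m}=\iI{n}{\delta+1}{mx_n}$, and complementing in $\M{n}{\delta+1}$ yields $(S/S_1L)_{\delta+1}=\Q{n}{\delta+1}{mx_n}$. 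For part (2), the hypothesis $(S/L)_\delta=\iQ{n}{\delta}{m}$ forces $\mathcal{I}=\I{n}{\delta}{m}$; Proposition \ref{p:idealmult} gives $(S_1L)_{\delta+1}=S_1\I{n}{\delta}{m}=\I{n}{\delta+1}{mx_{\max(m)}}$, and complementing yields $(S/S_1L)_{\delta+1}=\iQ{n}{\delta+1}{mx_{\max(m)}}$.

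The argument is essentially bookkeeping, so I do not anticipate a real obstacle; the only points needing a moment's care are the identifications $L_\delta=\mathcal{I}$ and $(S_1L)_{\delta+1}=S_1L_\delta$ (valid because $L$ is generated in degree $\delta$), and the degenerate cases such as $m=x_1^\delta$ or $m=x_n^\delta$, where one of the segments involved is empty or all of $\M{n}{\delta}$. In each of these the relevant identity of Proposition \ref{p:idealmult} and the complementation still hold verbatim, so nothing special is required.
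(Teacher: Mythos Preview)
Your proposal is correct and is essentially identical to the paper's own proof: both identify $L_\delta$ as $\iI{n}{\delta}{m}$ (resp.\ $\I{n}{\delta}{m}$) from the complementary quotient segment, invoke Proposition~\ref{p:idealmult} to compute $S_1L_\delta$, and then pass back to the complement in $\M{n}{\delta+1}$. The only difference is that you spell out the complementation and the bookkeeping identities $L_\delta=\mathcal{I}$, $(S_1L)_{\delta+1}=S_1L_\delta$ more explicitly than the paper does.
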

\begin{proof}\leavevmode
\begin{enumerate}
\item[(1)] We have $L_\delta=\iI{n}{\delta}{m}$, so $S_1L_\delta=\iI{n}{\delta+1}{mx_n}$ by Proposition \ref{p:idealmult}.  Thus, $(S/S_1L)_{\delta+1}=\Q{n}{\delta+1}{mx_n}$.

\item[(2)] We have $L_\delta=\I{n}{\delta}{m}$, so $S_1L_\delta=\I{n}{\delta+1}{mx_{\max(m)}}$ by Proposition \ref{p:idealmult}.  Thus, $(S/S_1L)_{\delta+1}=\iQ{n}{\delta+1}{mx_{\max(m)}}$.\qedhere
\end{enumerate}
\end{proof}

Finally, we recall the Macaulay representation of a number:

\begin{theorem}\label{t:macrep}
  Let $s$ be a nonnegative integer and $p$ a positive integer.  Then there exists a unique decreasing sequence of nonnegative integers $s_p>s_{p-1}>\dots>s_1$ such that
  \[s=\binom{s_p}{p}+\binom{s_{p-1}}{p-1}+\dots+\binom{s_1}{1}.\]
  (We use the convention that $\binom{i}{j}=0$ whenever $i<j$.)
\end{theorem}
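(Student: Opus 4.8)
The plan is to extract one combinatorial inequality and deduce both existence and uniqueness from it. The inequality is: if $t_p>t_{p-1}>\cdots>t_1\ge0$ are strictly decreasing nonnegative integers, then
\[
\sum_{i=1}^{p}\binom{t_i}{i}<\binom{t_p+1}{p}.
\]
I would prove this by induction on $p$. For $p=1$ it reads $t_1<t_1+1$. For $p\ge2$, since $t_{p-1}+1\le t_p$ and $a\mapsto\binom{a}{p-1}$ is nondecreasing on $\Zo$, the inductive hypothesis gives $\sum_{i=1}^{p-1}\binom{t_i}{i}<\binom{t_{p-1}+1}{p-1}\le\binom{t_p}{p-1}$, so Pascal's rule yields $\sum_{i=1}^{p}\binom{t_i}{i}<\binom{t_p}{p}+\binom{t_p}{p-1}=\binom{t_p+1}{p}$. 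I would also record once and for all the elementary fact that, for fixed $p\ge1$, the map $a\mapsto\binom{a}{p}$ on $\Zo$ vanishes exactly when $a\le p-1$ and is strictly increasing for $a\ge p-1$; consequently, for every $s\ge0$ the chain $\binom{a}{p}\le s<\binom{a+1}{p}$ has a unique solution $a\in\Zo$, and that solution satisfies $a\ge p-1$.

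For existence, I would run the obvious greedy procedure and induct on $p$. Let $s_p$ be the unique $a$ with $\binom{a}{p}\le s<\binom{a+1}{p}$, and set $s'=s-\binom{s_p}{p}\ge0$. Then $s'<\binom{s_p+1}{p}-\binom{s_p}{p}=\binom{s_p}{p-1}$. If $p=1$ then $s'=0$ and we stop; otherwise apply the inductive hypothesis to $s'$ and $p-1$ to obtain $s_{p-1}>\cdots>s_1\ge0$ with $s'=\sum_{i=1}^{p-1}\binom{s_i}{i}$. It remains only to check $s_{p-1}<s_p$: discarding the nonnegative lower terms gives $\binom{s_{p-1}}{p-1}\le s'<\binom{s_p}{p-1}$, so $\binom{s_p}{p-1}\ge1$, whence $s_p\ge p-1$; if we had $s_{p-1}\ge s_p$ then monotonicity would give $\binom{s_{p-1}}{p-1}\ge\binom{s_p}{p-1}$, a contradiction, so $s_{p-1}<s_p$. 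Concatenating produces the required representation.

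For uniqueness, I would again induct on $p$; the case $p=1$ is immediate since $\binom{s_1}{1}=s_1$. For $p\ge2$, suppose $s=\sum_{i=1}^{p}\binom{s_i}{i}$ with $s_p>\cdots>s_1\ge0$. The key inequality gives $s<\binom{s_p+1}{p}$, while $\binom{s_p}{p}\le s$ is trivial, so $s_p$ must be the unique solution of $\binom{a}{p}\le s<\binom{a+1}{p}$ --- in particular it does not depend on the chosen representation. Then $s-\binom{s_p}{p}=\sum_{i=1}^{p-1}\binom{s_i}{i}$ is a $(p-1)$-term representation of a fixed number, so the inductive hypothesis pins down $s_{p-1},\dots,s_1$.

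The argument is mostly bookkeeping once the displayed inequality is available. The one spot that genuinely needs care is the degenerate regime --- small $s$, and $s=0$ --- where many of the binomials $\binom{t}{p}$ are zero and naive ``strict monotonicity of $\binom{t}{p}$'' is false; this is precisely why I would isolate the vanishing locus of $a\mapsto\binom{a}{p}$ and the unique solvability of $\binom{a}{p}\le s<\binom{a+1}{p}$ at the outset, so that both the greedy choice of $s_p$ and its forced value in the uniqueness step are unambiguous. Everything else --- Pascal's rule, dropping nonnegative tail terms, and the two inductions --- is routine.
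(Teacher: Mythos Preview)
Your argument is correct. The uniqueness half matches the paper's closely: both hinge on the inequality $\sum_{i=1}^{p}\binom{t_i}{i}<\binom{t_p+1}{p}$ for a strictly decreasing sequence, and both then force $s_p$ to be the unique solution of $\binom{a}{p}\le s<\binom{a+1}{p}$ before inducting. The only cosmetic difference is that the paper derives the inequality by bounding $t_i\le t_p-p+i$ and invoking the hockey-stick identity $\sum_{i=0}^{p}\binom{t_p-p+i}{i}=\binom{t_p+1}{p}$, whereas you induct directly with Pascal's rule; the content is the same.

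Existence is where the two diverge. You carry out the greedy algorithm in full, verifying the strict drop $s_{p-1}<s_p$ via the residual bound $s'<\binom{s_p}{p-1}$ and handling the degenerate regime where $\binom{\cdot}{p}$ is not strictly increasing. The paper only remarks that the greedy algorithm works and instead defers its ``careful proof'' of existence to the next section, where it is obtained structurally: one realizes $s$ as the dimension of an ideal (or quotient) segment in a suitable $\M{p+1}{\delta}$, decomposes that segment as a direct sum of full monomial spaces (Theorem~\ref{t:longdecomp}), and reads off the coefficients from the summands (Corollary~\ref{c:decompdimen}). Your route is self-contained and purely combinatorial; the paper's route is less elementary but feeds directly into the Hilbert-function machinery that the rest of the paper needs.
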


\begin{definition} Let $s$ and $p$ be as in Theorem \ref{t:macrep}.  Then the expression 
  \[s=\binom{s_p}{p}+\binom{s_{p-1}}{p-1}+\cdots+\binom{s_1}{1}\]
  is called the \emph{$\ith{p}$ Macaulay representation} of $s$, and the numerators $(s_p,s_{p-1},\ldots,s_1)$ are called its \emph{($\ith{p}$) Macaulay coefficients}.
\end{definition}

\begin{example}
The $\ith{6}$ Macaulay coefficients of $114$ are $(9,7,5,4,1,0)$, since we have
\[114=\binom{9}{6}+\binom{7}{5}+\binom{5}{4}+\binom{4}{3}+\binom{1}{2}+\binom{0}{1}.\]
\end{example}

\begin{proof}[Proof of Theorem \ref{t:macrep}]
  We defer a careful proof that the Macaulay coefficients exist until the next section, where we will construct them in the context of Hilbert functions and lex ideals.  However, we note that the greedy algorithm works:  Choose $s_p$ maximal such that $\binom{s_p}{p}\leq s$, and then proceed by induction.

  To prove the sequence $(s_p,\ldots,s_1)$ is unique, we will induct on $p$.  Suppose for all nonnegative integers less than $s$ the corresponding $(p-1)$-tuple is unique.  We will prove for any pair of sums

  \[s=\sum_{i=1}^p\binom{s_i}{i}=\sum_{i=1}^p\binom{s_i'}{i}\]

  \noindent that $s_p=s_p'$.  Suppose without loss of generality that $s_p<s_p'$ for a contradiction.  Then

  \begin{align*}
  &s=\sum_{i=1}^p\binom{s_i}{i}\le\sum_{i=1}^p\binom{s_p-p+i}{i}<\sum_{i=0}^p\binom{s_p-p+i}{i} \\
  &\qquad=\binom{s_p+1}{p}\le\binom{s_p'}{p}\le\sum_{i=1}^p\binom{s_i'}{i}=s.
  \end{align*}

  \noindent This is a contradiction, so $s_p=s_p'$, and since
  
\[s-\binom{s_p}{p}=\sum_{i=1}^{p-1}\binom{s_i}{i}=\sum_{i=1}^{p-1}\binom{s_i'}{i},\]

  \noindent each $s_i=s_i'$ for all $1\le i\le p-1$ by the inductive hypothesis.  Therefore, the sequence $(s_p,\ldots,s_1)$ is unique.
\end{proof}

\section{Macaulay Representations of Ideals and Quotients}

We begin by recalling a standard fact about lex segments, which allows us to study them using induction.

\begin{theorem}\label{t:lexdecomp}
Let $\mathcal{I}$ and $\mathcal{Q}$ be initial and terminal lex segments, respectively, inside $\M{n}{\delta}$.  Then:
  \begin{enumerate}
  \item Suppose $\beta\ge1$ is minimal such that $x_1^\beta x_n^{\delta-\beta}\in\mathcal{I}$.  Then $\mathcal{I}=x_1^\beta(\M{n}{\delta-\beta})\oplus x_1^{\beta-1}\mathcal{J}$, for some initial lex segment $\mathcal{J}\subseteq\M{[2,n]}{\delta-\beta+1}$.
  \item Suppose $\gamma\ge2$ is minimal such that $x_\gamma^\delta\in\mathcal{Q}$.  Then $\mathcal{Q}=x_{\gamma-1}\mathcal{R}\oplus\M{[\gamma,n]}{\delta}$, for some terminal lex segment $\mathcal{R}\subseteq\M{[\gamma-1,n]}{\delta-1}$.
  \end{enumerate}
\end{theorem}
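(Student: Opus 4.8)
The plan is to treat both parts with one device: slice the ambient space $\M{n}{\delta}$ into ``blocks'' that are totally ordered among themselves by the lex order, use the hypothesis together with the minimality condition to determine which blocks lie entirely inside the segment and which lie entirely outside, and check that the single transitional block meets the segment in a lex segment on fewer variables. The preliminary fact I would record first is the routine characterization: a monomial subspace of $\M{n}{\delta}$ is an initial lex segment exactly when its monomial support is closed under passing to lex-larger monomials, and is a terminal lex segment exactly when its support is closed under passing to lex-smaller monomials. The forward directions are immediate from the definitions; for the converse one takes the generating monomial to be the lex-smallest (resp.\ lex-largest) element of the support, the empty support being realized by $\I{n}{\delta}{x_1^\delta}$ (resp.\ by $\Q{n}{\delta}{x_n^\delta}$).

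For part (1), group the degree-$\delta$ monomials by the exponent of $x_1$: for $0\le a\le\delta$ let $B_a:=x_1^a\M{[2,n]}{\delta-a}$ be spanned by the monomials of $x_1$-degree exactly $a$, and note that every monomial in $B_{a+1}$ is lex-larger than every monomial in $B_a$. The monomial $x_1^\beta x_n^{\delta-\beta}$ is the lex-smallest element of $B_\beta$; since it lies in $\mathcal{I}$ and $\mathcal{I}$ is closed under passing to lex-larger monomials, $\mathcal{I}$ contains $B_\beta\oplus\cdots\oplus B_\delta=x_1^\beta\M{n}{\delta-\beta}$. For $\beta\ge2$, minimality gives $x_1^{\beta-1}x_n^{\delta-\beta+1}\notin\mathcal{I}$, hence $\mathcal{I}$ contains no monomial of $x_1$-degree at most $\beta-2$ (any such would force the lex-larger monomial $x_1^{\beta-1}x_n^{\delta-\beta+1}$ into $\mathcal{I}$); when $\beta=1$ this is vacuous. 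Thus $\mathcal{I}=x_1^\beta\M{n}{\delta-\beta}\oplus(\mathcal{I}\cap B_{\beta-1})$, where $\mathcal{I}\cap B_{\beta-1}$ is closed under passing to lex-larger elements \emph{within} $B_{\beta-1}$. Writing $\mathcal{I}\cap B_{\beta-1}=x_1^{\beta-1}\mathcal{J}$ with $\mathcal{J}\subseteq\M{[2,n]}{\delta-\beta+1}$, the key point is that $\mu\mapsto x_1^{\beta-1}\mu$ is a bijection from $\M{[2,n]}{\delta-\beta+1}$ onto $B_{\beta-1}$ that preserves the lex order — the only coordinates it alters are the first $\beta-1$, which are constant on $B_{\beta-1}$ — so the support of $\mathcal{J}$ is closed under passing to lex-larger monomials in $\M{[2,n]}{\delta-\beta+1}$, i.e.\ $\mathcal{J}$ is an initial lex segment. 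The two summands have disjoint monomial supports, so the sum is direct, giving the claim (with $x_1^{\beta-1}=1$ when $\beta=1$, and $\mathcal{J}=0$ precisely when $\mathcal{I}=x_1^\beta\M{n}{\delta-\beta}$).

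Part (2) is the mirror image, grouping instead by $\min(\cdot)$: for $1\le a\le n$ let $C_a:=x_a\M{[a,n]}{\delta-1}$ be spanned by the monomials whose minimum is exactly $a$, so that every monomial in $C_a$ is lex-larger than every monomial in $C_{a+1}$. Here $x_\gamma^\delta$ is the lex-largest element of $C_\gamma\oplus\cdots\oplus C_n=\M{[\gamma,n]}{\delta}$; since it lies in the terminal segment $\mathcal{Q}$, which is closed under passing to lex-smaller monomials, all of $\M{[\gamma,n]}{\delta}$ lies in $\mathcal{Q}$. For $\gamma\ge3$, minimality gives $x_{\gamma-1}^\delta\notin\mathcal{Q}$, whence $\mathcal{Q}$ contains no monomial of minimum at most $\gamma-2$ (any such is lex-larger than $x_{\gamma-1}^\delta$, so would force $x_{\gamma-1}^\delta$ into $\mathcal{Q}$); for $\gamma=2$ there are no such monomials. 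Thus $\mathcal{Q}=(\mathcal{Q}\cap C_{\gamma-1})\oplus\M{[\gamma,n]}{\delta}$, and $\mathcal{Q}\cap C_{\gamma-1}$ is closed under passing to lex-smaller elements within $C_{\gamma-1}$. Since $\mu\mapsto x_{\gamma-1}\mu$ is again a lex-order-preserving bijection of $\M{[\gamma-1,n]}{\delta-1}$ onto $C_{\gamma-1}$, writing $\mathcal{Q}\cap C_{\gamma-1}=x_{\gamma-1}\mathcal{R}$ shows $\mathcal{R}\subseteq\M{[\gamma-1,n]}{\delta-1}$ is a terminal lex segment, and $\mathcal{Q}=x_{\gamma-1}\mathcal{R}\oplus\M{[\gamma,n]}{\delta}$ is a direct sum of monomial spaces.

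The only real friction I anticipate is bookkeeping at the transitional block: confirming that multiplication by $x_1^{\beta-1}$ (resp.\ $x_{\gamma-1}$) is a lex-order-preserving bijection onto that block, and that the part of the block lying in the segment is closed under lex comparisons internal to the block rather than in all of $\M{n}{\delta}$. Both reduce to the observation that two monomials in the same block agree on every coordinate outside the one range being varied, so nothing here is deep; one only needs to be careful with the degenerate cases ($\beta=1$ or $\gamma=2$, where there are no lower blocks, and empty $\mathcal{J}$ or $\mathcal{R}$) and to note that the hypotheses already presuppose $\mathcal{I}\ne0$ and $\mathcal{Q}\ne\M{n}{\delta}$. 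This is consistent with the theorem being a standard fact used to set up the inductions of the next section.
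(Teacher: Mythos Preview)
Your proof is correct. The decomposition you produce is the same as the paper's, but the argument differs in one notable way. The paper does not prove Theorem~\ref{t:lexdecomp} directly; instead it proves the sharper Proposition~\ref{p:lexdecompspec}, which fixes a generating monomial $m$ for the segment and identifies $\mathcal{J}$ and $\mathcal{R}$ explicitly as $\I{[2,n]}{\delta-\beta+1}{m/x_1^{\beta-1}}$ and $\Q{[\gamma-1,n]}{\delta-1}{m/x_{\gamma-1}}$, using projection maps $\phi,\psi$ whose kernel/image split the segment. Your route is more intrinsic: you never pick $m$, relying instead on the abstract characterization of lex segments as monomial subspaces closed under passing to lex-larger (resp.\ lex-smaller) monomials, together with the block filtration by $x_1$-degree (resp.\ by $\min(\cdot)$). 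This is cleaner for the existence statement as written and handles the edge cases uniformly, but it does not give the explicit formula for $\mathcal{J}$ and $\mathcal{R}$ that the paper needs for the inductions in Theorem~\ref{t:longdecomp}, Lemma~\ref{l:idealrep}, and Lemma~\ref{l:quotrep}; you would have to recover that identification separately.
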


\begin{remark}
Theorem \ref{t:lexdecomp} does not require that $\mathcal{I}$ (or $\mathcal{Q}$) be nontrivial or proper.  In the extreme cases, we simply let $\mathcal{J}$ (or $\mathcal{R}$) be the trivial vector space.
\end{remark}

We will prove a stronger version of Theorem \ref{t:lexdecomp}:

\begin{proposition}\label{p:lexdecompspec}
Let $\mathcal{I}=\I{n}{\delta}{m}$ and $\mathcal{Q}=\Q{n}{\delta}{m}$ be initial and terminal lex segments corresponding to $m$.  Then:
  \begin{enumerate}
  \item Suppose $\beta\ge1$ is minimal such that $x_1^\beta x_n^{\delta-\beta}\in\mathcal{I}$.  Then $\mathcal{I}=x_1^\beta(\M{n}{\delta-\beta})\oplus x_1^{\beta-1}\I{[2,n]}{\delta-\beta+1}{\frac{m}{x_1^{\beta-1}}}$.  In the case where there is no such minimal value, i.e. $m=x_1^\delta$ and $\mathcal{I}$ is trivial, we may take $\beta=\delta+1$.
  \item Suppose $\gamma\ge2$ is minimal such that $x_\gamma^\delta\in\mathcal{Q}$.  Then $\mathcal{Q}=x_{\gamma-1}\Q{[\gamma-1,n]}{\delta-1}{\frac{m}{x_{\gamma-1}}}\oplus\M{[\gamma,n]}{\delta}$.  In the case where there is no such minimal value, i.e. $m=x_n^\delta$ and $\mathcal{Q}$ is trivial, we may take $\gamma=n+1$.
  \end{enumerate}
\end{proposition}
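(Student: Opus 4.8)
The plan is to prove both parts directly from the definition of the lex order, since Proposition \ref{p:lexdecompspec} is meant to \emph{establish} Theorem \ref{t:lexdecomp} rather than to invoke it. In each case the work splits into two steps: first identify the threshold exponent ($\beta$ in part (1), $\gamma$ in part (2)) explicitly in terms of the exponent vector of $m$, and then sort the monomials of the segment according to a single coordinate — the $x_1$-exponent for part (1), the minimum for part (2).

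For part (1), I would first observe that for an integer $k$ the monomial $x_1^kx_n^{\delta-k}$ lies in $\I{n}{\delta}{m}$ if and only if $k>\alpha_1$, where $\alpha_1$ is the $x_1$-exponent of $m$; this is a one-line comparison of exponent vectors at the first coordinate, using that $x_n^{\delta-k}$ is the lex-smallest monomial of its degree. Consequently the minimal $\beta\ge1$ with $x_1^\beta x_n^{\delta-\beta}\in\mathcal{I}$ is $\beta=\alpha_1+1$; writing $m=x_1^{\beta-1}m''$ with $m''=m/x_1^{\beta-1}\in\M{[2,n]}{\delta-\beta+1}$ (so $x_1\nmid m''$), the degenerate case $m=x_1^\delta$ gives $\beta=\delta+1$ and $m''=1$, matching the stated convention. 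Next I partition the monomials of $\I{n}{\delta}{m}$ by their $x_1$-exponent $e$: if $e\ge\beta$ the monomial is lex-larger than $m$ already at the first coordinate, so it lies in $\mathcal{I}$, and these monomials span exactly $x_1^\beta\M{n}{\delta-\beta}$; if $e\le\beta-2$ the monomial is lex-smaller than $m$ at the first coordinate and hence not in $\mathcal{I}$; and a monomial $x_1^{\beta-1}\nu$ with $\nu\in\M{[2,n]}{\delta-\beta+1}$ lies in $\mathcal{I}$ iff $x_1^{\beta-1}\nu\lexl x_1^{\beta-1}m''$ iff $\nu\lexl m''$, i.e.\ iff $\nu\in\I{[2,n]}{\delta-\beta+1}{m''}$. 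Collecting the cases yields $\mathcal{I}=x_1^\beta\M{n}{\delta-\beta}\oplus x_1^{\beta-1}\I{[2,n]}{\delta-\beta+1}{m''}$, and the sum is direct because the two families of monomials have different $x_1$-exponents.

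Part (2) is the mirror image. I would first show that $x_\gamma^\delta\in\Q{n}{\delta}{m}$, i.e.\ $m\lexl x_\gamma^\delta$, holds if and only if $\min(m)<\gamma$, again by a comparison at the first differing coordinate. Hence the minimal $\gamma\ge2$ with $x_\gamma^\delta\in\mathcal{Q}$ is $\gamma=\min(m)+1$; writing $m=x_{\gamma-1}m'$ with $m'=m/x_{\gamma-1}\in\M{[\gamma-1,n]}{\delta-1}$, the case $m=x_n^\delta$ gives $\gamma=n+1$, matching the convention. Then I partition the monomials $\mu$ of $\Q{n}{\delta}{m}$ by $\min(\mu)$: if $\min(\mu)<\gamma-1$ then $\mu$ is lex-larger than $m$ at coordinate $\min(\mu)$ (where $m$ vanishes), so $\mu\notin\mathcal{Q}$; if $\min(\mu)\ge\gamma$ then $\mu\in\M{[\gamma,n]}{\delta}$ and $m\lexl\mu$ because $m$ has positive exponent at coordinate $\gamma-1$ while $\mu$ does not, so every such $\mu$ lies in $\mathcal{Q}$; and if $\min(\mu)=\gamma-1$, write $\mu=x_{\gamma-1}\nu$ with $\nu\in\M{[\gamma-1,n]}{\delta-1}$, and then $m\lexl\mu$ iff $m'\lexl\nu$, i.e.\ iff $\nu\in\Q{[\gamma-1,n]}{\delta-1}{m'}$. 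Collecting gives $\mathcal{Q}=x_{\gamma-1}\Q{[\gamma-1,n]}{\delta-1}{m'}\oplus\M{[\gamma,n]}{\delta}$, the sum being direct because monomials in the first summand have minimum $\gamma-1$ while those in the second have minimum $\ge\gamma$.

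I expect the only delicate points to be bookkeeping rather than substance: keeping the orientation of the lex order straight (recall $x_i\lexl x_j$ when $i<j$, so ``lex-larger'' means a strictly larger exponent at the first coordinate of disagreement), and checking that the two degenerate inputs $m=x_1^\delta$ and $m=x_n^\delta$ behave correctly — there the conventions $\beta=\delta+1$ and $\gamma=n+1$ force the relevant $\M{n}{\delta-\beta}$ (resp.\ $\M{[\gamma,n]}{\delta}$) to be the zero space and the remaining segment to be empty, so the stated formulas still hold. Everything else reduces to the coordinate comparisons described above.
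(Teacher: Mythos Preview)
Your argument is correct and is essentially the same as the paper's: both partition $\mathcal{I}$ by the $x_1$-exponent and $\mathcal{Q}$ by the minimum variable, and the only difference is packaging---the paper phrases the split as the kernel/image decomposition of an explicit projection, while you do a direct three-way case analysis. Your explicit identification $\beta=\alpha_1+1$ and $\gamma=\min(m)+1$ is stated more plainly than in the paper (which only records the divisibility consequences), but the content is identical.
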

\begin{proof}\leavevmode
\begin{enumerate}
    \item[(1)] Note that $x_1^{\beta-1}\,\big|\,m$ but $x_1^\beta\nmid m$.  We will treat each monomial space as a $k$-vector space spanned by its generating monomials.  Define $\phi:\mathcal{I}\to\mathcal{I}$, as a $k$-vector space map, to map $\mu\mapsto0$ if $x_1^\beta\,\big|\,\mu$ and $\mu\mapsto\mu$ otherwise.  We claim that $\ker(\phi)=x_1^\beta\M{n}{\delta-\beta}$ and $\im(\phi)=x_1^{\beta-1}\I{[2,n]}{\delta-\beta+1}{\frac{m}{x_1^{\beta-1}}}$.  We immediately have $\ker(\phi)\subseteq x_1^\beta\M{n}{\delta-\beta}$ and $x_1^{\beta-1}\I{[2,n]}{\delta-\beta+1}{\frac{m}{x_1^{\beta-1}}}\subseteq\im(\phi)$.
    
    For any monomial $m'\in\M{n}{\delta-\beta}$, we also have $x_1^\beta m'\lexl m$ since $x_1^\beta\nmid m$.  Thus, $x_1^\beta\M{n}{\delta-\beta}\subseteq\ker(\phi)$.  We conclude that $\ker(\phi)=x_1^\beta\M{n}{\delta-\beta}$.
    
    Now suppose $m'\in\im(\phi)$ is nonzero, so $m'\lexl m$.  By minimality of $\beta$, $x_1^{\beta-1}\,\big|\,m'$ so that $\frac{m'}{x_1^{\beta-1}}\lexl\frac{m}{x_1^{\beta-1}}$.  Thus, $\im(\phi)\subseteq x_1^{\beta-1}\I{[2,n]}{\delta-\beta+1}{\frac{m}{x_1^{\beta-1}}}$ and we have equality.
    
    Since $\phi$ is a projection map, $\mathcal{I}=\ker(\phi)\oplus\im(\phi)=x_1^\beta\M{n}{\delta-\beta}\oplus x_1^{\beta-1}\I{[2,n]}{\delta-\beta+1}{\frac{m}{x_1^{\beta-1}}}$.

    \item[(2)] Note that $x_{\gamma-1}\,\big|\,m$ but $x_i\nmid m$ for any $1\le i\le\gamma-2$.  Define a map $\psi:\mathcal{Q}\to\mathcal{Q}$ between $k$-vector spaces by mapping $\mu\mapsto0$ if $x_{\gamma-1}\,\big|\,\mu$ and $\mu\mapsto\mu$ otherwise.   We claim $\ker(\psi)=x_{\gamma-1}\Q{[\gamma-1,n]}{\delta-1}{\frac{m}{x_{\gamma-1}}}$ and $\im(\psi)=\M{[\gamma,n]}{\delta}$.  We immediately have $x_{\gamma-1}\Q{[\gamma-1,n]}{\delta-1}{\frac{m}{x_{\gamma-1}}}\subseteq\ker(\psi)$ and $\M{[\gamma,n]}{\delta}\subseteq\im(\psi)$.
    
    Suppose $\mu\in\im(\psi)$.  Then by construction, $x_i\nmid\mu$ for all $1\le i\le\gamma-1$.  Thus, $\mu\in\M{[\gamma,n]}{\delta}$.  We conclude that $\im(\psi)=\M{[\gamma,n]}{\delta}$.
    
    Now suppose $\mu\in\ker(\psi)$.  Then $\mu=x_{\gamma-1}\mu'$ for some $\mu'\in\M{[\gamma-1,n]}{\delta-1}$.  Since $m\lexl\mu$, $\frac{m}{x_{\gamma-1}}\lexl\mu'$, so $\mu'\in\Q{[\gamma-1,n]}{\delta-1}{\frac{m}{x_{\gamma-1}}}$.  Thus, $\mu\in x_{\gamma-1}\Q{[\gamma-1,n]}{\delta-1}{\frac{m}{x_{\gamma-1}}}$.  We conclude that $\ker(\psi)=x_{\gamma-1}\Q{[\gamma-1,n]}{\delta-1}{\frac{m}{x_{\gamma-1}}}$.  Since $\psi$ is a projection map, $\mathcal{Q}=x_{\gamma-1}\Q{[\gamma-1,n]}{\delta-1}{\frac{m}{x_{\gamma-1}}}\oplus\M{[\gamma,n]}{\delta}$.\qedhere
\end{enumerate}
\end{proof}

We now induct on $n$ (or $\delta$) to express an arbitrary lex segment as a sum of monomial spaces.

\begin{theorem}\label{t:longdecomp}
Let $m$ be a degree-$\delta$ monomial in $n$ variables.  Then $\I{n}{\delta}{m}\cong\bigoplus_{i=1}^{n-1}\M{[i,n]}{\beta_i}$ and $\Q{n}{\delta}{m}\cong\bigoplus_{i=1}^{\delta}\M{[\gamma_i,n]}{i}$ where $\beta_i\ge-1,\gamma_i\ge2$ are some integers for each $i$.
\end{theorem}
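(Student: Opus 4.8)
The plan is to obtain both decompositions by iterating Proposition~\ref{p:lexdecompspec}, peeling off one monomial space at a time: prove the statement for $\I{n}{\delta}{m}$ by induction on $n$ (letting $\delta$ and $m$ vary), and prove the statement for $\Q{n}{\delta}{m}$ by induction on $\delta$ (letting $n$ and $m$ vary). The two elementary facts that let us recognize the resulting pieces in the required form are that multiplication by a fixed monomial is an injective linear map on monomial spaces, and the identification $\M{[r,s]}{e}\cong\M{s-r+1}{e}$ from Notation~\ref{n:monosp}.

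\emph{Ideal segment.} The base case $n=1$ is trivial: the only degree-$\delta$ monomial in one variable is $x_1^\delta$, so $\I{1}{\delta}{x_1^\delta}$ is the zero space, which is the empty direct sum $\bigoplus_{i=1}^{0}$. For $n\ge2$, apply Proposition~\ref{p:lexdecompspec}(1) with $\beta$ minimal as there (or $\beta=\delta+1$ in the degenerate case $m=x_1^\delta$):
\[\I{n}{\delta}{m}=x_1^\beta\M{n}{\delta-\beta}\ \oplus\ x_1^{\beta-1}\I{[2,n]}{\delta-\beta+1}{m/x_1^{\beta-1}}.\]
The first summand is isomorphic to $\M{[1,n]}{\delta-\beta}$, so set $\beta_1:=\delta-\beta$; since $\beta\le\delta+1$, we have $\beta_1\ge-1$. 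The second summand is isomorphic to $\I{[2,n]}{\delta-\beta+1}{m/x_1^{\beta-1}}$, which, after identifying $x_2,\dots,x_n$ with $y_1,\dots,y_{n-1}$, is an ideal segment in $n-1$ variables. By the inductive hypothesis it is isomorphic to a direct sum of spaces $\M{[j,n-1]}{\beta_j'}$ for $1\le j\le n-2$ with each $\beta_j'\ge-1$; reverting the reindexing ($y_j=x_{j+1}$) turns this into $\bigoplus_{i=2}^{n-1}\M{[i,n]}{\beta_i}$ with $\beta_i\ge-1$. Adjoining the first summand gives $\I{n}{\delta}{m}\cong\bigoplus_{i=1}^{n-1}\M{[i,n]}{\beta_i}$.

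\emph{Quotient segment.} This is the mirror image. The base case $\delta=0$ is trivial ($\Q{n}{0}{1}$ is the zero space, the empty sum). For $\delta\ge1$, apply Proposition~\ref{p:lexdecompspec}(2) with $\gamma$ minimal as there (or $\gamma=n+1$ in the degenerate case $m=x_n^\delta$):
\[\Q{n}{\delta}{m}=x_{\gamma-1}\Q{[\gamma-1,n]}{\delta-1}{m/x_{\gamma-1}}\ \oplus\ \M{[\gamma,n]}{\delta}.\]
The second summand already has the required shape, with top index $\gamma_\delta:=\gamma\ge2$. The first summand is isomorphic to $\Q{[\gamma-1,n]}{\delta-1}{m/x_{\gamma-1}}$, a terminal lex segment of degree $\delta-1$ in the variables $x_{\gamma-1},\dots,x_n$; by the inductive hypothesis (applied after identifying those with $z_1,\dots,z_{n-\gamma+2}$) it is isomorphic to $\bigoplus_{i=1}^{\delta-1}\M{[c_i,n-\gamma+2]}{i}$ with $c_i\ge2$, and shifting indices back up by $\gamma-2$ makes this $\bigoplus_{i=1}^{\delta-1}\M{[\gamma_i,n]}{i}$ with $\gamma_i=c_i+\gamma-2\ge\gamma\ge2$. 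Adjoining the degree-$\delta$ summand completes the induction.

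The only delicate point is keeping the variable ranges straight across the two reindexings, and it is precisely these index shifts that carry along the bounds $\beta_i\ge-1$ and $\gamma_i\ge2$; everything else is bookkeeping. The degenerate cases ($m=x_1^\delta$ for the ideal, $m=x_n^\delta$ for the quotient) require no separate argument, since the conventions $\beta=\delta+1$ and $\gamma=n+1$ built into Proposition~\ref{p:lexdecompspec} force the relevant summand to be the zero space. I do not expect a genuine obstacle here: the content of the theorem is essentially that Proposition~\ref{p:lexdecompspec} can be iterated.
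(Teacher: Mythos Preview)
Your proof is correct and follows essentially the same route as the paper: induct on $n$ for the ideal segment and on $\delta$ for the quotient segment, each time using Proposition~\ref{p:lexdecompspec} to split off one monomial-space summand and then invoking the inductive hypothesis on the remaining smaller segment. You are slightly more explicit than the paper about the reindexing and about why the bounds $\beta_i\ge-1$ and $\gamma_i\ge2$ survive the induction, but the argument is the same.
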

\begin{proof}
We induct on $n$ for the first isomorphism and $\delta$ for the second.  Suppose the first isomorphism holds for $n-1$ variables; we will assume $\I{[2,n]}{\rho}{\mu}\cong\bigoplus_{i=2}^{n-1}\M{[i,n]}{\beta_i}$ for some values $\beta_i$, for any $\rho\in\Z^+$ and $\mu\in\M{[2,n]}{\rho}$.  Let $\beta_1=\delta-\beta$, where $\beta$ is constructed for $\I{n}{\delta}{m}$ as in Proposition \ref{p:lexdecompspec}.  By Proposition \ref{p:lexdecompspec}, $\I{n}{\delta}{m}\cong\M{n}{\beta_1}\oplus\I{[2,n]}{\beta_1+1}{\frac{m}{x_1^{\beta-1}}}$, and by the inductive hypothesis, $\I{n}{\delta}{m}\cong\M{[1,n]}{\beta_1}\oplus\left(\bigoplus_{i=2}^{n-1}\M{[i,n]}{\beta_i}\right)=\bigoplus_{i=1}^{n-1}\M{[i,n]}{\beta_i}$.

Now suppose the second isomorphism holds for monomials of degree $\delta-1$; we will assume $\Q{r}{\delta-1}{\mu}\cong\bigoplus_{i=1}^{\delta-1}\M{[\gamma_i,r]}{i}$ for some values $\gamma_i$, for all $r\in\Z^+$ and $\mu\in\M{r}{\delta-1}$.  Let $\gamma_\delta=\gamma$, where $\gamma$ is constructed for $\Q{n}{\delta}{m}$ as in Proposition \ref{p:lexdecompspec}.  By Proposition \ref{p:lexdecompspec}, $\Q{n}{\delta}{m}\cong\M{[\gamma_\delta,n]}{\delta}\oplus\Q{[\gamma_\delta-1,n]}{\delta-1}{\frac{m}{x_{\gamma-1}}}$, and by the inductive hypothesis, $\Q{n}{\delta}{m}\cong\M{[\gamma_\delta,n]}{\delta}\oplus\left(\bigoplus_{i=1}^{\delta-1}\M{[\gamma_i,n]}{i}\right)=\bigoplus_{i=1}^\delta\M{[\gamma_i,n]}{i}$.
\end{proof}

\begin{corollary}\label{c:decompdimen}
Let $\beta_i,\gamma_i$ be as in Theorem \ref{t:longdecomp}.  The $\ith{(n-1)}$ Macaulay representation of $\dim\I{n}{\delta}{m}$ is
\[\dim\I{n}{\delta}{m}=\sum_{i=1}^{n-1}\binom{\beta_i+n-i}{n-i}=\sum_{i=1}^{n-1}\binom{\beta_{n-i}+i}{i}\]
and the $\ith{\delta}$ Macaulay representation of $\dim\Q{n}{\delta}{m}$ is
\[\dim\Q{n}{\delta}{m}=\sum_{i=1}^{\delta}\binom{n-\gamma_i+i}{i}.\]
Thus, the $\ith{(n-1)}$ and $\ith{\delta}$ Macaulay coefficients for $\dim\I{n}{\delta}{m}$ and $\dim\Q{n}{\delta}{m}$ are, respectively, $s_i=\beta_{n-i}+i$ for $1\le i\le n-1$ and $t_i=n-\gamma_i+i$ for $1\le i\le\delta$.
\end{corollary}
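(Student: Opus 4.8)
The plan is to feed the two direct-sum decompositions of Theorem \ref{t:longdecomp} into the standard monomial count and then recognize the resulting binomial sums as Macaulay representations via the uniqueness clause of Theorem \ref{t:macrep}. First I would record the dimension of a generic building block: $\M{[i,n]}{d}$ is spanned by the degree-$d$ monomials in the $n-i+1$ variables $x_i,\dots,x_n$, so a stars-and-bars count gives $\dim\M{[i,n]}{d}=\binom{d+n-i}{n-i}$; for $d=-1$ this reads $\binom{n-i-1}{n-i}=0$, matching the zero space, and for an empty variable set (the case $i=n+1$ that occurs in the quotient) both sides again vanish, so the formula is uniform over the parameter ranges arising in Theorem \ref{t:longdecomp}. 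Applying it termwise to $\I{n}{\delta}{m}\cong\bigoplus_{i=1}^{n-1}\M{[i,n]}{\beta_i}$ gives $\dim\I{n}{\delta}{m}=\sum_{i=1}^{n-1}\binom{\beta_i+n-i}{n-i}$, and substituting $i\mapsto n-i$ rewrites this as $\sum_{i=1}^{n-1}\binom{\beta_{n-i}+i}{i}$. For the quotient, counting degree-$i$ monomials in the $n-\gamma_i+1$ variables $x_{\gamma_i},\dots,x_n$ gives $\dim\M{[\gamma_i,n]}{i}=\binom{n-\gamma_i+i}{i}$, so $\dim\Q{n}{\delta}{m}=\sum_{i=1}^{\delta}\binom{n-\gamma_i+i}{i}$ directly from the second isomorphism of Theorem \ref{t:longdecomp}.

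Next I would argue that these sums are in fact the $\ith{(n-1)}$ and $\ith{\delta}$ Macaulay representations, not merely valid binomial identities. Each has the exact shape $\sum_{i=1}^{p}\binom{c_i}{i}$ with $p=n-1$ (resp. $p=\delta$), so it suffices to check that the numerators $c_i$ are strictly increasing in $i$ and nonnegative, since then by the uniqueness half of Theorem \ref{t:macrep} the displayed sum must be the Macaulay representation. For the ideal this reduces to $\beta_1\ge\beta_2\ge\cdots\ge\beta_{n-1}$ together with $\beta_{n-1}\ge-1$; for the quotient it reduces to $\gamma_1\ge\gamma_2\ge\cdots\ge\gamma_\delta$ together with $\gamma_1\le n+1$. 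The nonnegativity halves are immediate from Theorem \ref{t:longdecomp} and its degenerate conventions ($\beta_i\ge-1$ always, and $\gamma_1$ is either an index at most $n$ or the degenerate value $n+1$), so the real content is the monotonicity of $(\beta_i)$ and $(\gamma_i)$. Once that is established, reading off the numerators yields $s_i=\beta_{n-i}+i$ for $1\le i\le n-1$ and $t_i=n-\gamma_i+i$ for $1\le i\le\delta$, which is the last assertion of Corollary \ref{c:decompdimen}.

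To obtain the monotonicity I would unwind the recursion used to prove Theorem \ref{t:longdecomp}. There, $\beta_{i+1}$ is the value produced by applying Proposition \ref{p:lexdecompspec}(1) to a lex segment in the variables $x_{i+1},\dots,x_n$ of degree $\beta_i+1$ (the second summand occurring at the $i$-th stage of the induction): one takes the minimal exponent $\beta^{(i+1)}\ge1$ with $x_{i+1}^{\beta^{(i+1)}}x_n^{(\beta_i+1)-\beta^{(i+1)}}$ in that segment and sets $\beta_{i+1}=(\beta_i+1)-\beta^{(i+1)}$; since $\beta^{(i+1)}\ge1$ this forces $\beta_{i+1}\le\beta_i$ (the degenerate pure-power case being absorbed by the convention $\beta^{(i+1)}=\beta_i+2$, which still gives $\beta_{i+1}=-1\le\beta_i$). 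The identical bookkeeping with Proposition \ref{p:lexdecompspec}(2) shows that $\gamma_i$ is a minimal index at least $(\gamma_{i+1}-1)+1=\gamma_{i+1}$ with the appropriate pure power in the corresponding sub-segment, hence $\gamma_i\ge\gamma_{i+1}$. Feeding these inequalities back into the two displayed sums and invoking the uniqueness in Theorem \ref{t:macrep} identifies them as the claimed Macaulay representations. I expect this monotonicity step to be the main obstacle: it is conceptually simple but requires care with the index shifts built into the recursion of Theorem \ref{t:longdecomp} and with the degenerate cases in which a summand collapses to the zero space.
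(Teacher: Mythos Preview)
Your proposal is correct and follows the only natural route: compute dimensions from the direct-sum decomposition via the standard monomial count, then check the monotonicity needed to invoke the uniqueness in Theorem~\ref{t:macrep}. The paper in fact states Corollary~\ref{c:decompdimen} without proof, treating it as immediate from Theorem~\ref{t:longdecomp}; the nonincreasing behavior of $(\beta_i)$ and $(\gamma_i)$ is only asserted (not argued) in the subsequent proofs of Theorem~\ref{t:macrep}, so your unwinding of the recursion in Proposition~\ref{p:lexdecompspec} to establish $\beta_{i+1}\le\beta_i$ and $\gamma_{i}\ge\gamma_{i+1}$ is actually more than the paper provides.
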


We now prove the existence half of Theorem \ref{t:macrep}, in two ways, as a corollary to Theorem \ref{t:longdecomp}.

\begin{proof}[Proof of Theorem \ref{t:macrep} using ideals.]
  Choose $\delta$ such that $\dim\M{p+1}{\delta}>s$, and let $\mathcal{I}=\I{p+1}{\delta}{m}$ be the ideal segment of a monomial $m$ chosen so that $\mathcal{I}$ has dimension $s$.  Then by Corollary \ref{c:decompdimen},

  \[s=\dim\mathcal{I}=\sum_{i=1}^p\binom{\beta_{p-i+1}+i}{i}\]

  \noindent for some nonincreasing values $\beta_i\ge0$.  Thus, the sequence $\beta_{p-i+1}+i$ decreases as $i$ decreases, so $(\beta_1+p,\beta_2+p-1,\ldots,\beta_p+1)$ are the $\ith{p}$ Macaulay coefficients of $s$.
\end{proof}

\begin{proof}[Proof of Theorem \ref{t:macrep} using quotients.]
  Choose $n$ such that $\dim\M{n}{p}>s$, and let $\mathcal{Q}=\Q{n}{p}{m}$ be the quotient segment of a monomial $m$ chosen so that $\mathcal{Q}$ has dimension $s$.  Then by Corollary \ref{c:decompdimen},

  \[s=\dim\mathcal{Q}=\sum_{i=1}^p\binom{n-\gamma_i+i}{i}\]

  \noindent for some nonincreasing values $\gamma_i\ge2$.  Thus, the sequence $n-\gamma_i+i$ decreases as $i$ decreases, so $(n-\gamma_p+p,n-\gamma_{p-1}+p-1,\ldots,n-\gamma_1+1)$ are the $\ith{p}$ Macaulay coefficients of $s$.
\end{proof}

We are now ready to state the numerical rules characterizing Hilbert functions.

\begin{corollary}\label{c:longdecompmult} Let $L$ be a lex ideal generated in degree $\delta$, and suppose $\mathcal{I}=L_\delta$ and $\mathcal{Q}=(S/L)_\delta$.  Write
  \[\mathcal{I}\cong\bigoplus_{i=1}^{n-1}\M{[i,n]}{\beta_i}\]
  and
  \[\mathcal{Q}\cong\bigoplus_{i=1}^{\delta}\M{[\gamma_i,n]}{i}\]
  as in Theorem \ref{t:longdecomp}.
  Then
  \begin{enumerate}
  \item $S_1L_\delta\cong\bigoplus_{i=1}^{n-1}\M{[i,n]}{\beta_i+1}$.
  \item $(S/L)_{\delta+1}\cong\bigoplus_{i=1}^{\delta}\M{[\gamma_i,n]}{i+1}$
  \end{enumerate}
\end{corollary}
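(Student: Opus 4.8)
The plan is to re-run the two inductions that proved Theorem~\ref{t:longdecomp}, now keeping track of how the canonical decomposition behaves under multiplication by $S_1$. The first move is to recognize the enlarged spaces as lex segments of an explicit monomial: writing $L_\delta=\I{n}{\delta}{m}$ (the inclusive case $L_\delta=\iI{n}{\delta}{m}$ is handled identically with $x_n$ in place of $x_{\max(m)}$ below), Proposition~\ref{p:idealmult} gives $S_1L_\delta=\I{n}{\delta+1}{mx_{\max(m)}}$, and if $(S/L)_\delta=\Q{n}{\delta}{m}$ then Proposition~\ref{p:quotientmult} gives $(S/L)_{\delta+1}=\Q{n}{\delta+1}{mx_n}$. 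So for both parts it suffices to apply Proposition~\ref{p:lexdecompspec} to these specific enlarged segments and check that the decomposition it produces is the claimed shift of the one for $L_\delta$, resp. $(S/L)_\delta$.

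For part~(1) I would induct on $n$. Writing $x_1^{\beta-1}\,\big|\,m$ but $x_1^\beta\nmid m$, so $\beta_1=\delta-\beta$, the crucial point is that $\max(m)\ge2$ whenever $L_\delta\ne0$, so $mx_{\max(m)}$ has the same $x_1$-exponent as $m$; hence Proposition~\ref{p:lexdecompspec}(1) applied to $\I{n}{\delta+1}{mx_{\max(m)}}$ returns the same $\beta$, the top summand $\M{[1,n]}{\beta_1+1}$, and the inner segment $\I{[2,n]}{\beta_1+2}{\frac{m}{x_1^{\beta-1}}\cdot x_{\max(m)}}$. Since $\frac{m}{x_1^{\beta-1}}$ is a positive-degree monomial in $x_2,\dots,x_n$ with $\max\!\left(\frac{m}{x_1^{\beta-1}}\right)=\max(m)$, this inner segment is precisely $S_1\cdot\I{[2,n]}{\beta_1+1}{\frac{m}{x_1^{\beta-1}}}$ by Proposition~\ref{p:idealmult} applied inside $k[x_2,\dots,x_n]$, so the inductive hypothesis identifies its canonical decomposition as the shift of $\bigoplus_{i=2}^{n-1}\M{[i,n]}{\beta_i}$; reassembling proves the claim. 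Part~(2) is entirely parallel, inducting on $\delta$: with $\gamma=\min(m)+1$ one has $\min(mx_n)=\min(m)$, so Proposition~\ref{p:lexdecompspec}(2) applied to $\Q{n}{\delta+1}{mx_n}$ returns the same $\gamma$, the top summand $\M{[\gamma,n]}{\delta+1}$, and inner segment $\Q{[\gamma-1,n]}{\delta}{\frac{m}{x_{\gamma-1}}\cdot x_n}$, which equals $S_1\cdot\Q{[\gamma-1,n]}{\delta-1}{\frac{m}{x_{\gamma-1}}}$ in $k[x_{\gamma-1},\dots,x_n]$ by Proposition~\ref{p:quotientmult}(1); the inductive hypothesis in degree $\delta-1$ then closes the loop.

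The substantive step is exactly that identification of the inner segment as an $S_1$-translate of the old inner segment inside a smaller polynomial ring, since that is what lets the inductive hypothesis bite; the rest is bookkeeping. The one place requiring care is the trivial summands: the recursion can bottom out at an empty lex segment, giving a summand $\M{[i,n]}{\beta_i}$ with $\beta_i=-1$ (for the quotient side, $\M{[\gamma_i,n]}{i}$ with $\gamma_i=n+1$). On the quotient side this is harmless because $\M{[n+1,n]}{i}=\M{[n+1,n]}{i+1}=0$, so the index shift $i\mapsto i+1$ preserves triviality; on the ideal side one should note that such a summand stays the zero space after multiplying by $S_1$ (it is not promoted to $\M{[i,n]}{0}$), i.e., the shift $\beta_i\mapsto\beta_i+1$ is to be read as acting on the nontrivial summands only — and one checks from the construction in Proposition~\ref{p:lexdecompspec}(1) that the values $\beta_i=-1$ always occupy a terminal block. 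Finally, the base cases $n=1$ (resp. $\delta=0$) hold vacuously, all relevant direct sums being empty, which completes the argument.
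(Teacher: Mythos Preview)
Your argument is correct and takes a route that differs in execution from the paper's.  The paper proves part~(1) by multiplying the decomposition $\mathcal{I}=x_1^\beta\M{n}{\delta-\beta}\oplus x_1^{\beta-1}\I{[2,n]}{\delta-\beta+1}{\mu}$ directly by $S_1$, then checking the absorption $x_1^\beta\I{[2,n]}{\delta-\beta+1}{\mu}\subseteq x_1^\beta\M{n}{\delta-\beta+1}$ to recover a direct sum; part~(2) is similar, with the more delicate absorption $x_{\gamma-1}\M{[\gamma,n]}{\delta}\subseteq x_{\gamma-1}\Q{[\gamma-1,n]}{\delta}{\mu x_n}$ verified by hand.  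You instead first invoke Propositions~\ref{p:idealmult} and~\ref{p:quotientmult} to name the enlarged segments as $\I{n}{\delta+1}{mx_{\max(m)}}$ and $\Q{n}{\delta+1}{mx_n}$, then apply Proposition~\ref{p:lexdecompspec} to these new monomials and observe that (since the $x_1$-exponent, respectively the minimum, is unchanged) the same $\beta$ and $\gamma$ reappear; the inner segment is then recognized, again via Propositions~\ref{p:idealmult}/\ref{p:quotientmult} inside the smaller ring, as exactly the object to which the inductive hypothesis applies.  This buys you the absorption identities for free, at the cost of tracking the defining monomial through the recursion.  Both proofs share the same inductive skeleton on $n$ (resp.~$\delta$) and the same key input, Proposition~\ref{p:lexdecompspec}.

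Two minor remarks.  Your shorthand ``$S_1\cdot\Q{[\gamma-1,n]}{\delta-1}{\mu}$'' is an abuse---what you mean, and what Proposition~\ref{p:quotientmult}(1) actually delivers, is the degree-$\delta$ piece of the quotient by the lex ideal whose degree-$(\delta-1)$ quotient is $\Q{[\gamma-1,n]}{\delta-1}{\mu}$; the argument is unaffected.  Second, your flag on the $\beta_i=-1$ summands is well taken: as literally stated the isomorphism in (1) would promote a trivial summand $\M{[i,n]}{-1}=0$ to the one-dimensional $\M{[i,n]}{0}$, which is false (e.g.\ $n=3$, $m=x_1x_2$ gives $\beta_2=-1$ but $\dim S_1L_\delta=3$, not $4$).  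Your reading---that the shift acts only on the nontrivial summands, equivalently that the terminal block of $-1$'s stays at $-1$---is the correct interpretation, and the paper's own proof does not address this point.
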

\begin{proof}\leavevmode
\begin{enumerate}
\item[(1)] We will induct on $n$.  By Proposition \ref{p:lexdecompspec}, $\mathcal{I}=x_1^\beta(\M{n}{\delta-\beta})\oplus x_1^{\beta-1}\I{[2,n]}{\delta-\beta+1}{\mu}$ for some monomial $\mu$, where $\beta$ is minimally chosen so that $x_1^\beta x_n^{\delta-\beta}\in\mathcal{I}$.  Then

\begin{align*}
(S_1L)_\delta&=x_1^\beta\M{n}{\delta-\beta+1}+S_1(x_1^{\beta-1}\I{[2,n]}{\delta-\beta+1}{\mu}) \\
&=x_1^\beta\M{n}{\delta-\beta+1}+x_1(x_1^{\beta-1}\I{[2,n]}{\delta-\beta+1}{\mu})+x_1^{\beta-1}T_1\I{[2,n]}{\delta-\beta+1}{\mu},
\end{align*}

where $T_1=\M{[2,n]}{1}$.  Observe that $x_1^\beta\I{[2,n]}{\delta-\beta+1}{\mu}\subseteq x_1^\beta\M{n}{\delta-\beta+1}$, so $(S_1L)_\delta=x_1^\beta\M{n}{\delta-\beta+1}\oplus x_1^{\beta-1}T_1\I{[2,n]}{\delta-\beta+1}{\mu}$.  By induction, this is isomorphic to $\M{[1,n]}{\beta_1+1}\oplus\left(\bigoplus_{i=2}^{n-1}\M{[i,n]}{\beta_i+1}\right)=\bigoplus_{i=1}^{n-1}\M{[i,n]}{\beta_i+1}$ where $\beta_1=\delta-\beta$.  This proves the result.

\item[(2)] We will induct on $\delta$.  By Proposition \ref{p:lexdecompspec}, $\mathcal{Q}=x_{\gamma-1}\Q{[\gamma-1,n]}{\delta-1}{\mu}\oplus\M{[\gamma,n]}{\delta}$ for some monomial $\mu$, where $\gamma$ is minimally chosen so that $x_\gamma^\delta\in\mathcal{Q}$.  Then $(S/L)_{\delta+1}$ is the direct sum of the spaces

\begin{align*}
S_1(x_{\gamma-1}\Q{[\gamma-1,n]}{\delta-1}{\mu})&=(x_1,x_2,\ldots,x_{\gamma-2})x_{\gamma-1}\Q{[\gamma-1,n]}{\delta-1}{\mu} \\
&\qquad\qquad+(x_{\gamma-1},\ldots,x_n)x_{\gamma-1}\Q{[\gamma-1,n]}{\delta-1}{\mu} \\
&=\{0\}+x_{\gamma-1}\Q{[\gamma-1,n]}{\delta}{\mu x_n},
\end{align*}
by Proposition \ref{p:quotientmult}, and
\begin{align*}
S_1(\M{[\gamma,n]}{\delta})&=(x_1,x_2,\ldots,x_{\gamma-2})\M{[\gamma,n]}{\delta}+x_{\gamma-1}\M{[\gamma,n]}{\delta}+(x_\gamma,\ldots,x_n)\M{[\gamma,n]}{\delta} \\
&=\{0\}+x_{\gamma-1}\M{[\gamma,n]}{\delta}+\M{[\gamma,n]}{\delta+1},
\end{align*}

where if $\gamma=2$, the space $(x_1,\ldots,x_{\gamma-2})=\{0\}$.  Note that monomials with variables whose indices precede $\gamma-1$ are in $L$.

Thus, $(S/L)_{\delta+1}=x_{\gamma-1}\Q{[\gamma-1,n]}{\delta}{\mu x_n}+x_{\gamma-1}\M{[\gamma,n]}{\delta}+\M{[\gamma,n]}{\delta+1}$.  We claim $x_{\gamma-1}\M{[\gamma,n]}{\delta}\subseteq x_{\gamma-1}\Q{[\gamma-1,n]}{\delta}{\mu x_n}$.

Let $w\in x_{\gamma-1}\M{[\gamma,n]}{\delta}$ and write $w=x_{\gamma-1}u$, where $u\in\M{[\gamma,n]}{\delta}$.  If $\mu x_n\lexl u$, then $w\in x_{\gamma-1}\Q{[\gamma-1,n]}{\delta}{\mu x_n}$.  Now suppose $u\lexleq\mu x_n$ so that $w\lexleq(x_{\gamma-1}\mu)x_n$.  Since $(S/L)_\delta=\Q{[\gamma-1,n]}{\delta}{x_{\gamma-1}\mu}$, $x_{\gamma-1}\mu\in L$ so that $w\in S_1L$.  The inclusion thus holds and we have

\begin{align*}
(S/L)_{\delta+1}&=(x_{\gamma-1},\ldots,x_n)x_{\gamma-1}\Q{[\gamma-1,n]}{\delta-1}{\mu}\oplus\M{[\gamma,n]}{\delta+1} \\
&\cong\left(\bigoplus_{i=1}^{\delta-1}\M{\gamma_i}{i+1}\right)\oplus\M{[\gamma_\delta,n]}{\delta+1} \\
&=\bigoplus_{i=1}^\delta\M{[\gamma_i,n]}{i+1},
\end{align*}

by the inductive hypothesis, where $\gamma_\delta=\gamma$.  This proves the result.\qedhere
\end{enumerate}
\end{proof}
  
\begin{corollary}[Macaulay's theorem, numerical version]
  Suppose $\mathcal{I}=L_\delta$ and $\mathcal{Q}=(S/L)_\delta$ as in Corollary \ref{c:longdecompmult}.  Then
  \[\dim S_1L_\delta=\sum_{i=1}^{n-1}\binom{\beta_{n-i}+i+1}{i}\]
  and
  \[\dim(S/L)_{\delta+1}=\sum_{i=1}^{\delta}\binom{n-\gamma_i+i+1}{i+1}.\]
\end{corollary}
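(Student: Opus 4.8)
The plan is to deduce both identities directly from Corollary~\ref{c:longdecompmult} by replacing each monomial space occurring there with its dimension. The one external ingredient is the elementary count that $\M{[i,n]}{d}$, being spanned by the degree-$d$ monomials in the $n-i+1$ variables $x_i,\ldots,x_n$, has dimension $\binom{d+n-i}{n-i}$ (the number of size-$d$ multisets drawn from $n-i+1$ elements; this is consistent with the convention $\binom{a}{b}=0$ for $a<b$ in the degenerate cases $\beta_i=-1$ or $\gamma_i\in\{n,n+1\}$). Everything past that is index bookkeeping, so I do not expect a genuine obstacle here — the only thing to watch is keeping the substitutions straight.

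For the ideal formula, I would start from Corollary~\ref{c:longdecompmult}(1), namely $S_1L_\delta\cong\bigoplus_{i=1}^{n-1}\M{[i,n]}{\beta_i+1}$. Taking dimensions and applying the count above with $d=\beta_i+1$ gives
\[\dim S_1L_\delta=\sum_{i=1}^{n-1}\binom{\beta_i+1+n-i}{n-i}.\]
I would then reindex by $i\mapsto n-i$: as $i$ runs over $1,\ldots,n-1$ so does $n-i$, and the summand becomes $\binom{\beta_{n-i}+i+1}{i}$, which is precisely $\sum_{i=1}^{n-1}\binom{\beta_{n-i}+i+1}{i}$ as claimed. I would also note, via Corollary~\ref{c:decompdimen}, that this equals $\sum_{i=1}^{n-1}\binom{s_i+1}{i}$ where the $s_i=\beta_{n-i}+i$ are the $\ith{(n-1)}$ Macaulay coefficients of $\dim L_\delta$, which is the point of the statement.

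For the quotient formula, I would start from Corollary~\ref{c:longdecompmult}(2), namely $(S/L)_{\delta+1}\cong\bigoplus_{i=1}^{\delta}\M{[\gamma_i,n]}{i+1}$. Taking dimensions and applying the count with $d=i+1$ gives $\dim(S/L)_{\delta+1}=\sum_{i=1}^{\delta}\binom{(i+1)+n-\gamma_i}{n-\gamma_i}$. The only cosmetic step is the symmetry $\binom{a}{b}=\binom{a}{a-b}$ with $a=n-\gamma_i+i+1$ and $b=n-\gamma_i$, which rewrites each summand as $\binom{n-\gamma_i+i+1}{i+1}$, yielding $\sum_{i=1}^{\delta}\binom{n-\gamma_i+i+1}{i+1}$; again by Corollary~\ref{c:decompdimen} this is $\sum_{i=1}^{\delta}\binom{t_i+1}{i+1}$ for the $\ith{\delta}$ Macaulay coefficients $t_i=n-\gamma_i+i$ of $\dim(S/L)_\delta$. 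No reindexing is needed in this case since the summands are already indexed by degree.

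Finally I would add a short remark that these two identities, combined with the growth-minimality of lex ideals furnished by Macaulay's theorem, recover Theorems~\ref{t:introideal} and~\ref{t:introquot}: for a lex ideal one has the exact values computed above, while for an arbitrary graded ideal $I$ one replaces $I$ by a lex ideal of the same Hilbert function and compares against the ideal generated in the relevant degree, turning the equalities into the stated inequalities. Since that deduction is standard, the substantive content of the corollary is exactly the two displayed computations above.
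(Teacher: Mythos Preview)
Your proposal is correct and matches the paper's intended approach: the corollary is stated in the paper without proof precisely because it follows from Corollary~\ref{c:longdecompmult} by taking dimensions of each summand $\M{[i,n]}{d}$ (using $\dim\M{[i,n]}{d}=\binom{d+n-i}{n-i}$) and, in the ideal case, reindexing $i\mapsto n-i$. Your added remarks tying the result back to the Macaulay coefficients via Corollary~\ref{c:decompdimen} and to Theorems~\ref{t:introideal} and~\ref{t:introquot} are accurate but go beyond what the paper spells out.
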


\section{Macaulay Coefficients and Exponent Vectors}

By the proof of Theorem \ref{t:macrep}, the Macaulay coefficients of an integer can be viewed as the Macaulay coefficients of an ideal (or quotient) segment corresponding to a certain monomial $m$.  Thus, we should be able to determine the coefficients directly from the exponent vector of $m$.  We begin by illustrating how to do so for the ideal segment.

The process will involve taking successively smaller pieces of the factorization of $m$, which we refer to as the coarse and fine tails of $m$ for the ideal and quotient, respectively.

\begin{definition}
Let $m=x_1^{\alpha_1}\cdots x_n^{\alpha_n}=x_{j_1}\cdots x_{j_\delta}$ be a monomial.  For $0\le i\le n-1$, define the \emph{$\ith{i}$ coarse tail of} $m$ to be $\ct_i(m):=x_{i+1}^{\alpha_{i+1}}x_{i+2}^{\alpha_{i+2}}\cdots x_n^{\alpha_n}$.  For $0\le i\le\delta-1$, define the \emph{$\ith{i}$ fine tail of} $m$ to be $\ft_i(m):=x_{j_{i+1}}x_{j_{i+2}}\cdots x_{j_\delta}$.
\end{definition}

\begin{example}
In $k[a,b,c,d]$, let $m=a^2bd^4$.  Then $\ct_0(m)=a^2bd^4$, $\ct_1(m)=bd^4$, and $\ct_2(m)=\ct_3(m)=d^4$.  Also, $\ft_0(m)=a^2bd^4$, $\ft_1(m)=abd^4$, $\ft_2(m)=bd^4$, $\ft_3(m)=d^4$, $\ft_4(m)=d^3$, $\ft_5(m)=d^2$, and $\ft_6(m)=d$.
\end{example}

The terms in the Macaulay representation come directly from the summands in the decomposition in Theorem \ref{t:longdecomp}.  We will show how the tails determine the summands.

\begin{example}\label{e:idrep}
Consider the monomial $m_1=a^2bd^3\!f^2$ in $k[a,b,c,d,e,f]$, so $n=6$ and $\delta=8$.  We want to find $\dim\I{6}{8}{m_1}$ by first understanding the structure of $\I{6}{8}{m_1}$.

Applying Proposition \ref{p:lexdecompspec}, we obtain

\[\I{6}{8}{a^2bd^3\!f^2}=a^3\M{6}{5}\oplus a^2\I{[2,6]}{6}{bd^3\!f^2},\]

\noindent and $bd^3\!f^2=\ct_1(a^2bd^3\!f^2)=\frac{m_1}{a^2}$.  Let $m_2=bd^3\!f^2$.  Obtaining another ideal segment, we may perform the same action on $\I{[2,6]}{6}{m_2}$:

\[\I{[2,6]}{6}{bd^3\!f^2}=b^2\M{[2,6]}{4}\oplus b\I{[3,6]}{5}{d^3\!f^2},\]

\noindent and $d^3\!f^2=\ct_2(a^2bd^3\!f^2)=\frac{m_2}{b}$.  Let $m_3=d^3\!f^2$.  Once again, we have

\[\I{[3,6]}{5}{d^3\!f^2}=c^1\M{[3,6]}{4}\oplus c^0\I{[4,6]}{5}{d^3\!f^2}.\]

Observe that $m_4=d^3\!f^2$ is equal to $m_3$, but these monomials reside in different spaces, and hence have ideal segments of different lengths.  As seen in the previous step, the ideal segments we construct may not necessarily decrease in degree, but they will be over a strictly decreasing number of variables.  Expanding the overall expression at this step, we have

\[\I{6}{8}{a^2bd^3\!f^2}=a^3\M{6}{5}\oplus a^2b^2\M{[2,6]}{4}\oplus a^2b^1c^1\M{[3,6]}{4}\oplus a^2b^1c^0\I{[4,6]}{5}{d^3\!f^2}.\]

We find that $\dim\I{6}{8}{m_1}=\dim\M{6}{5}+\dim\M{[2,6]}{4}+\dim\M{[3,6]}{4}+\dim\I{[4,6]}{5}{m_4}$.  At each step, a new term is added to this calculation, and at the end this should result in the sum of dimensions of monomial spaces, which are individually much easier to find than for lex spaces.  The full decomposition is

\[\I{6}{8}{a^2bd^3\!f^2}=a^3\M{6}{5}\oplus a^2b^2\M{[2,6]}{4}\oplus a^2bc\M{[3,6]}{4}\oplus a^2bd^4\M{[4,6]}{1}\oplus a^2bd^3e^1\M{[5,6]}{1},\]

\noindent so the final sum is

\begin{align*}
\dim\I{6}{8}{a^2bd^3\!f^2}
&=\dim\M{6}{5}+\dim\M{[2,6]}{4}+\dim\M{[3,6]}{4}+\dim\M{[4,6]}{1}+\dim\M{[5,6]}{1} \\[5pt]
&=\binom{6+5-1}{6-1}+\binom{5+4-1}{5-1}+\binom{4+4-1}{4-1}+\binom{3+1-1}{3-1}+\binom{2+1-1}{2-1} \\[5pt]
&=\binom{10}{5}+\binom{8}{4}+\binom{7}{3}+\binom{3}{2}+\binom{2}{1} \\[5pt]
&=252+70+35+3+2 \\[5pt]
&=362.
\end{align*}

Thus, there are $362$ degree-$8$ monomials in $6$ variables preceding $a^2bd^3\!f^2$ in the lex order.
\end{example}

\begin{lemma}\label{l:idealrep}
Let $n$ be given and $m_1=x_1^{\alpha_1}\cdots x_n^{\alpha_n}$.  Then
\[\dim\I{n}{\delta}{m_1}=\sum_{i=1}^{n-1}\binom{i+\deg(\ct_{n-i}(m_1))-1}{i}.\]
\end{lemma}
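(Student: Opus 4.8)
The plan is to iterate Proposition \ref{p:lexdecompspec}(1) exactly as demonstrated in Example \ref{e:idrep}, keeping careful track of how the monomial indexing the residual ideal segment changes at each step, and then apply Corollary \ref{c:decompdimen} (equivalently, sum the dimensions of the monomial spaces directly). The key bookkeeping observation is this: if at some stage we are looking at $\I{[r,n]}{\rho}{\mu}$ where $\mu = x_r^{\alpha_r}x_{r+1}^{\alpha_{r+1}}\cdots x_n^{\alpha_n}$ (note $\mu = \ct_{r-1}(m_1)$ with the original exponents), then the minimal $\beta \ge 1$ with $x_r^{\beta}x_n^{\rho-\beta}$ in the segment is $\beta = \alpha_r + 1$, so Proposition \ref{p:lexdecompspec}(1) splits off the summand $x_r^{\alpha_r+1}\M{[r,n]}{\rho - \alpha_r - 1}$ and leaves $x_r^{\alpha_r}\I{[r+1,n]}{\rho - \alpha_r}{\mu/x_r^{\alpha_r}}$, where $\mu/x_r^{\alpha_r} = \ct_r(m_1)$. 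Thus the residual segment's monomial is always the next coarse tail, and its degree at stage $r$ is $\deg(\ct_r(m_1))$ --- except possibly when some $\alpha_r = 0$, in which case $\beta = 1$, the monomial-space summand $x_r \M{[r,n]}{\rho-1}$ appears, and the residual degree drops by $1$ but the residual monomial is unchanged ($\ct_{r-1}(m_1) = \ct_r(m_1)$ already).

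First I would formalize the induction on $n$. The base case $n = 1$ is vacuous (empty sum, and $\I{1}{\delta}{m_1}$ is trivial since $x_1^{\delta}$ is the unique degree-$\delta$ monomial and the only thing lex-smaller is nothing). For the inductive step, set $\beta = \alpha_1 + 1$ if $\alpha_1 < \delta$ (and handle $m_1 = x_1^{\delta}$ separately via the convention $\beta = \delta + 1$, giving a trivial segment and empty sum, consistent since then every $\ct_{n-i}(m_1)$ is a pure power of... actually one must check the formula still yields $0$ --- but in that degenerate case $m_1 = x_1^\delta$ forces $n \geq 2$ trouble, so I would instead just note $m_1 \ne x_1^\delta$ is the interesting case and the trivial case is immediate). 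By Proposition \ref{p:lexdecompspec}(1),
\[\I{n}{\delta}{m_1} = x_1^{\beta}\M{n}{\delta-\beta} \oplus x_1^{\beta-1}\I{[2,n]}{\delta-\beta+1}{m_1/x_1^{\beta-1}},\]
and since $\beta - 1 = \alpha_1$, we have $m_1/x_1^{\alpha_1} = \ct_1(m_1)$, which lives in $n-1$ variables and has degree $\deg(\ct_1(m_1))$. Taking dimensions: $\dim x_1^\beta \M{n}{\delta-\beta} = \dim \M{n}{\delta - \alpha_1 - 1} = \binom{n + (\delta - \alpha_1 - 1) - 1}{n-1} = \binom{(n-1) + \deg(\ct_1(m_1)) - 1}{n-1}$, using $\deg(\ct_1(m_1)) = \delta - \alpha_1$; this is precisely the $i = n-1$ term of the claimed sum. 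By the inductive hypothesis applied to $\ct_1(m_1)$ in the polynomial ring on $n-1$ variables $x_2,\ldots,x_n$,
\[\dim\I{[2,n]}{\delta-\alpha_1}{\ct_1(m_1)} = \sum_{i=1}^{n-2}\binom{i + \deg(\ct_{(n-1)-i}(\ct_1(m_1))) - 1}{i},\]
and the crucial identity is $\ct_{j}(\ct_1(m_1)) = \ct_{j+1}(m_1)$, which is immediate from the definition of coarse tail. Reindexing the sum (replace $i$ by $i$, noting $(n-1)-i$ coarse-tail of $\ct_1(m_1)$ is the $((n-1)-i)+1 = n-i$ coarse-tail of $m_1$) gives $\sum_{i=1}^{n-2}\binom{i + \deg(\ct_{n-i}(m_1)) - 1}{i}$, which supplies the $i = 1, \ldots, n-2$ terms. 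Adding the two pieces completes the induction.

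The main obstacle is not any deep idea but the careful handling of the case $\alpha_1 = 0$ and, more generally, making sure the degree accounting $\deg(\ct_1(m_1)) = \delta - \alpha_1$ and the variable-count shift mesh correctly so that the term split off at each stage lands in the right slot of the final sum. When $\alpha_1 = 0$ we get $\beta = 1$, the residual segment is $\I{[2,n]}{\delta}{m_1}$ (same degree, same monomial, now read in $n-1$ variables, and $m_1 = \ct_1(m_1)$ genuinely since $x_1 \nmid m_1$), and the split-off summand $x_1 \M{n}{\delta - 1}$ has dimension $\binom{n + \delta - 2}{n-1} = \binom{(n-1) + \deg(\ct_1(m_1)) - 1}{n-1}$, so the formula still matches; I would either fold this into the general computation by writing everything in terms of $\deg(\ct_\bullet)$ from the start (which makes the $\alpha_1 = 0$ case require no separate treatment) or dispatch it with a one-line remark. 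A secondary point worth stating explicitly is that the $i$-th coarse tail of $\ct_1(m_1)$ equals the $(i+1)$-th coarse tail of $m_1$ and that degrees are preserved under the identification $\M{[2,n]}{\rho} \cong \M{n-1}{\rho}$, so that the inductive hypothesis can legitimately be invoked; these are routine but are the linchpins of the reindexing.
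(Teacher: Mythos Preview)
Your proposal is correct and follows essentially the same route as the paper: induct on $n$, apply Proposition~\ref{p:lexdecompspec}(1) with $\beta=\alpha_1+1$ to split off the $i=n-1$ term, then invoke the inductive hypothesis on $\ct_1(m_1)$ in $k[x_2,\dots,x_n]$ and reindex via the identity $\ct_j(\ct_1(m_1))=\ct_{j+1}(m_1)$. The paper's version is a bit terser about the edge cases you flag (it absorbs $\alpha_1=0$ into the general computation without comment and does not separately treat $m_1=x_1^\delta$), but the arguments are otherwise interchangeable.
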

\begin{proof}
We induct on $n$.  We have $x_1^{\alpha_1+1}x_n^{\delta-\alpha_1-1}\lexl m_1$, but $m_1\lexleq x_1^{\alpha_1}x_n^{\delta-\alpha_1}$.  By Proposition \ref{p:lexdecompspec},

\[\I{n}{\delta}{m_1}=x_1^{\alpha_1+1}\M{[1,n]}{\delta-\alpha_1-1}\oplus x_1^{\alpha_1}\I{[2,n]}{\delta-\alpha_1}{m_2},\]

\noindent where $m_2=\ct_1(m_1)=\frac{m_1}{x_1^{\alpha_1}}$.  Thus,

\begin{align*}
\dim\I{n}{\delta}{m_1}
&=\dim\M{[1,n]}{\delta-\alpha_1-1}+\dim\I{[2,n]}{\delta-\alpha_1}{m_2} \\
&=\binom{\delta-\alpha_1-1+n-1}{n-1}+\sum_{i=1}^{n-2}\binom{i+\deg(\ct_{(n-1)-i}^T(m_2))-1}{i},
\end{align*}

\noindent by the inductive hypothesis inside $T=k[x_2,\ldots,x_n]$, where $\ct_j^T(m_2)$ is the $\ith{j}$ coarse tail of $m_2$ in $T$, i.e. obtained by deleting all $x_\ell$ with $2\le\ell\le j+1$.  This can be viewed as a coarse tail in $k[x_1,\ldots,x_n]$ by observing $\ct_j^T(m_2)=\ct_{j+1}(m_2)=\ct_{j+1}(m_1)$.  Thus,

\begin{align*}
\dim\I{n}{\delta}{m_1}
&=\binom{\delta-\alpha_1-1+n-1}{n-1}+\sum_{i=1}^{n-2}\binom{i+\deg(\ct_{(n-1)-i}^T(m_2))-1}{i} \\
&=\binom{\deg(\ct_1(m_1))-1+n-1}{n-1}+\sum_{i=1}^{n-2}\binom{i+\deg(\ct_{n-i}(m_1))-1}{i} \\
&=\sum_{i=1}^{n-1}\binom{i+\deg(\ct_{n-i}(m_1))-1}{i},
\end{align*}

\noindent as desired.
\end{proof}

Lemma \ref{l:idealrep} is the Macaulay representation of $\dim\I{n}{\delta}{m_1}$, which proves the following:

\begin{theorem}\label{t:idealcoeff}
Let $m$ be a degree-$\delta$ monomial in $n$ variables.  The $\ith{(n-1)}$-Macaulay coefficients of $\dim\I{n}{\delta}{m}=\sum_i\binom{s_i}{i}$ are $s_i=i+\deg(\ct_{n-i}(m))-1$ for each $i$.
\end{theorem}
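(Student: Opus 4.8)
The plan is to deduce Theorem \ref{t:idealcoeff} directly from Lemma \ref{l:idealrep} together with the uniqueness half of Theorem \ref{t:macrep}. Lemma \ref{l:idealrep} already exhibits $\dim\I{n}{\delta}{m}$ as a sum $\sum_{i=1}^{n-1}\binom{s_i}{i}$ with $s_i = i + \deg(\ct_{n-i}(m)) - 1$; since the $\ith{(n-1)}$ Macaulay representation of a nonnegative integer is unique, it suffices to check that the tuple $(s_{n-1}, s_{n-2}, \ldots, s_1)$ actually satisfies the hypotheses of Theorem \ref{t:macrep}, namely that it is a strictly decreasing sequence of nonnegative integers, after which uniqueness identifies these numerators with the Macaulay coefficients.

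First I would record the elementary behavior of the coarse tails. By definition $\ct_j(m) = x_{j+1}^{\alpha_{j+1}}\cdots x_n^{\alpha_n}$, so $\deg(\ct_j(m)) = \sum_{\ell=j+1}^n \alpha_\ell$ is a non-increasing function of $j$; equivalently, $\deg(\ct_{n-i}(m))$ is non-decreasing in $i$. Consequently, for $2 \le i \le n-1$,
\[ s_i - s_{i-1} = 1 + \big(\deg(\ct_{n-i}(m)) - \deg(\ct_{n-i+1}(m))\big) \ge 1 > 0, \]
so $s_{n-1} > s_{n-2} > \cdots > s_1$. For nonnegativity, note $s_1 = \deg(\ct_{n-1}(m)) = \alpha_n \ge 0$, and the strict monotonicity just established then forces $s_i \ge 0$ for every $i$.

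Having verified these two properties, the expression in Lemma \ref{l:idealrep} is a legitimate $\ith{(n-1)}$ Macaulay representation of $\dim\I{n}{\delta}{m}$, and by the uniqueness clause of Theorem \ref{t:macrep} its numerators are precisely the $\ith{(n-1)}$ Macaulay coefficients, i.e. $s_i = i + \deg(\ct_{n-i}(m)) - 1$. I do not expect a genuine obstacle: the only point requiring care is the monotonicity bookkeeping for the coarse tails, and implicitly that the degenerate case $m = x_1^\delta$ (where the ideal segment is trivial) is handled correctly — there each $\ct_{n-i}(m) = 1$, so $s_i = i - 1$ and every binomial $\binom{i-1}{i}$ vanishes, consistent with $\dim\I{n}{\delta}{x_1^\delta} = 0$.
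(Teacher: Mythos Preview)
Your proposal is correct and follows essentially the same approach as the paper: the paper simply asserts that Lemma \ref{l:idealrep} \emph{is} the $\ith{(n-1)}$ Macaulay representation of $\dim\I{n}{\delta}{m}$ and immediately states Theorem \ref{t:idealcoeff}. You have supplied the routine verification (strict monotonicity and nonnegativity of the $s_i$) that the paper leaves implicit, which is a welcome addition rather than a departure.
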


\begin{definition}
We will call the sum $\sum_{i=1}^{n-1}\binom{s_i}{i}$ in Theorem \ref{t:idealcoeff} the \emph{$\ith{n}$ ideal representation of $m$} and the corresponding integers $(s_{n-1},s_{n-2},\ldots,s_1)$ the \emph{$\ith{n}$ ideal coefficients of $m$}.
\end{definition}

Note that the ideal cofficients do not depend on the degree of $m$; in fact, the set of monomials with the same ideal coefficients as $m$ is $\{x_1^im\ |\ i\ge-\alpha_1\}$.

We can now determine the $\ith{n}$ ideal coefficients for a monomial given its exponent vector.

\begin{example}\label{e:idcoeffcomp}
We computed the $\ith{6}$ ideal coefficients $(s_i)=(10,8,7,3,2)$ for $m=a^2bd^3\!f^2$ in $k[a,b,c,d,e,f]$ by converting its ideal segment into a direct sum of monomial spaces and finding the dimension of each as a $k$-vector space.  We can alternatively use Theorem \ref{t:idealcoeff} to find these coefficients directly:
\begin{align*}
s_1&=1+\deg(f^2)-1             & &=2 \\
s_2&=2+\deg(e^0f^2)-1          & &=3 \\
s_3&=3+\deg(d^3e^0f^2)-1       & &=7 \\
s_4&=4+\deg(c^0d^3e^0f^2)-1    & &=8 \\
s_5&=5+\deg(b^1c^0d^3e^0f^2)-1 & &=10.
\end{align*}
\end{example}

\begin{example}\label{e:quotrep}
In Example \ref{e:idrep}, we found the dimension of the ideal segment of the monomial $m_1=a^2bd^3\!f^2$ in $k[a,b,c,d,e,f]$.  Now we will find the dimension of $\Q{6}{8}{m_1}$, which is spanned by the degree-$8$ monomials in the variables $a$ through $f$ strictly lex-smaller than $m_1$.  Note that $m_1$ factors as $m_1=aabdddf\!\!f$.  Writing $m_1=x_{j_1}x_{j_2}\cdots x_{j_\delta}$, we have $j_1=j_2=1$, $j_3=2$, $j_4=j_5=j_6=4$, and $j_7=j_8=6$.  By Proposition \ref{p:lexdecompspec},

\[\Q{6}{8}{a^2bd^3\!f^2}=a\Q{[1,6]}{7}{abd^3\!f^2}\oplus\M{[2,6]}{8}.\]

Every monomial not divisible by $a$, namely those in the variables $b$ through $f$, is in $\Q{6}{8}{m_1}$.  The others, after factoring out $a$, produce another quotient segment $\Q{[1,6]}{7}{m_2}$ where $m_2=abd^3\!f^2=\ft_1(a^2bd^3\!f^2)=\frac{m_1}{a}$.  As before, the generators of $\Q{[1,6]}{7}{m_2}$ contain all monomials in the variables $b$ through $f$, but this time of one degree lower.  The rest form a quotient segment whose generators have one more factor of $a$:

\[\Q{[1,6]}{7}{abd^3\!f^2}=a\Q{[1,6]}{6}{bd^3\!f^2}\oplus\M{[2,6]}{7}=a\Q{[2,6]}{6}{bd^3\!f^2}\oplus\M{[2,6]}{7},\]

\noindent with $bd^3\!f^2=\ft_2(m_1)=\frac{m_2}{a}$.

In Example \ref{e:idrep}, each step reduced the number of variables exactly once whereas the degree decreased according to the coarse tails of the monomial.  For the quotient segment, the opposite happens: the degree decreases exactly once per step while the number of variables decreases according to the fine tails.

We may choose to view the new quotient segment $\Q{[1,6]}{6}{bd^3\!f^2}$ as over the variables $b$ through $f$ instead of $a$ through $f$ by Lemma \ref{l:reducequoseg}.  This will help keep track of which variables are ``removed'' as we obtain quotient segments of decreasing degree.  More precisely, the left endpoint of the bracket after the $\ith{i}$ step is equal to $j_{i+1}$.

Continuing the decomposition inside the smaller ring, we get

\[\Q{[2,6]}{6}{bd^3\!f^2}=b\Q{[4,6]}{5}{d^3\!f^2}\oplus \M{[3,6]}{6}.\]

Expanding the overall expression so far, we have

\[\Q{6}{8}{a^2bd^3\!f^2}=a^2b\Q{[4,6]}{5}{d^3\!f^2}\oplus a^2\M{[3,6]}{6}\oplus a\M{[2,6]}{7}\oplus\M{[2,6]}{8}.\]

The complete decomposition will be

\begin{align*}
\Q{6}{8}{a^2bd^3\!f^2}&=\M{[2,6]}{8}\oplus a\M{[2,6]}{7}\oplus a^2\M{[3,6]}{6}\oplus a^2b\M{[5,6]}{5} \\[5pt]
&\qquad\quad{}\oplus a^2bd\M{[5,6]}{4}\oplus a^2bd^2\M{[5,6]}{3}\oplus a^2bd^3\M{0}{2}\oplus a^2bd^3f\M{0}{1}.
\end{align*}

Taking the dimension gives us the final count:

\begin{align*}
\dim\Q{6}{8}{a^2bd^3\!f^2}
&=\dim\M{[2,6]}{8}+\dim\M{[2,6]}{7}+\dim\M{[3,6]}{6}+\dim\M{[5,6]}{5} \\[5pt]
&\qquad\quad{}+\dim\M{[5,6]}{4}+\dim\M{[5,6]}{3}+\dim\M{0}{2}+\dim\M{0}{1} \\[5pt]
&=\binom{5+8-1}{8}+\binom{5+7-1}{7}+\binom{4+6-1}{6}+\binom{2+5-1}{5} \\[5pt]
&\qquad\quad{}+\binom{2+4-1}{4}+\binom{2+3-1}{3}+\binom{0+2-1}{2}+\binom{0+1-1}{1} \\[5pt]
&=\binom{12}{8}+\binom{11}{7}+\binom{9}{6}+\binom{6}{5}+\binom{5}{4}+\binom{4}{3}+\binom{1}{2}+\binom{0}{1} \\[5pt]
&=495+330+84+6+5+4+0+0 \\[5pt]
&=924.
\end{align*}

Thus, there are $924$ degree-$8$ monomials in $6$ variables following $m_1$ in the lex order.  We may verify this result along with the one obtained in Example \ref{e:idrep} by combining all monomials strictly lex-larger than $m_1$ with all monomials strictly lex-smaller than it, as well as $m_1$ itself, to have $362+924+1=1287=\binom{6+8-1}{8}$, the total number of monomials of degree-$8$ in $6$ variables.
\end{example}

\begin{lemma}\label{l:quotrep}
Let $\delta$ be a positive integer and $m_1=x_1^{\alpha_1}\cdots x_n^{\alpha_n}$.  Then
\[\dim\Q{n}{\delta}{m_1}=\sum_{i=1}^\delta\binom{n-\min(\ft_{\delta-i}(m_1))+i-1}{i}.\]
\end{lemma}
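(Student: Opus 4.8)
The plan is to mirror the proof of Lemma~\ref{l:idealrep}, replacing the coarse-tail induction on the number of variables with a fine-tail induction on the degree. Concretely, I would induct on $\delta$. For the base case $\delta=1$, the quotient segment $\Q{n}{1}{m_1}$ consists of the variables strictly lex-smaller than $x_{\min(m_1)}$, i.e. $x_{\min(m_1)+1},\ldots,x_n$, of which there are $n-\min(m_1)=n-\min(\ft_0(m_1))$, matching the single term $\binom{n-\min(\ft_0(m_1))+1-1}{1}$.

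For the inductive step, I would apply Proposition~\ref{p:lexdecompspec}(2) to $\Q{n}{\delta}{m_1}$. The key observation is that the minimal $\gamma\ge2$ with $x_\gamma^\delta\in\Q{n}{\delta}{m_1}$ satisfies $\gamma-1=\min(m_1)=\min(\ft_0(m_1))=j_1$: indeed $x_{j_1+1}^\delta\lexl m_1$ since $x_{j_1}$ divides $m_1$, while $x_{j_1}^\delta\lexleq m_1$. Thus
\[
\Q{n}{\delta}{m_1}=x_{j_1}\Q{[j_1,n]}{\delta-1}{m_1/x_{j_1}}\oplus\M{[j_1+1,n]}{\delta},
\]
and $m_1/x_{j_1}=x_{j_2}x_{j_3}\cdots x_{j_\delta}=\ft_1(m_1)$. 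Taking dimensions, the $\M{[j_1+1,n]}{\delta}$ summand contributes $\dim\M{n-j_1}{\delta}=\binom{n-j_1+\delta-1}{\delta}=\binom{n-\min(\ft_{\delta-\delta}(m_1))+\delta-1}{\delta}$, which is exactly the $i=\delta$ term of the claimed sum. The other summand has the same dimension as $\Q{n-j_1+1}{\delta-1}{m_2}$ where $m_2=\ft_1(m_1)$ is regarded in the ring $k[x_{j_1},\ldots,x_n]$ (or, using Lemma~\ref{l:reducequoseg}, in $k[x_{j_1+1},\ldots,x_n]$ if $x_{j_1}\nmid m_2$; but it is cleanest to keep the left endpoint at $j_1$).

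Then I would invoke the inductive hypothesis on $\dim\Q{[j_1,n]}{\delta-1}{m_2}$, which, under the identification $\M{[j_1,n]}{\rho}\cong\M{n-j_1+1}{\rho}$, gives
\[
\dim\Q{[j_1,n]}{\delta-1}{m_2}=\sum_{i=1}^{\delta-1}\binom{(n-j_1+1)-\min{}^{[j_1,n]}(\ft_{(\delta-1)-i}(m_2))+i-1}{i},
\]
where $\min^{[j_1,n]}$ measures position within the subring. The bookkeeping step is to translate this back: if $\mu'$ is a monomial in $x_{j_1},\ldots,x_n$, then its subring-minimum $\mu'$ relates to its ambient minimum by $\min^{[j_1,n]}(\mu')=\min(\mu')-j_1+1$, so $(n-j_1+1)-\min^{[j_1,n]}(\mu')=n-\min(\mu')$. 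Moreover $\ft_k(m_2)=\ft_k(\ft_1(m_1))=\ft_{k+1}(m_1)$, so $\ft_{(\delta-1)-i}(m_2)=\ft_{\delta-i}(m_1)$ for $1\le i\le\delta-1$. Substituting, the inductive sum becomes $\sum_{i=1}^{\delta-1}\binom{n-\min(\ft_{\delta-i}(m_1))+i-1}{i}$, and adding back the $i=\delta$ term yields the full sum $\sum_{i=1}^{\delta}\binom{n-\min(\ft_{\delta-i}(m_1))+i-1}{i}$, completing the induction.

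The main obstacle is purely clerical rather than conceptual: keeping the index translations straight between the ambient ring and the shrinking subring $k[x_{j_1},\ldots,x_n]$. There are two independent reindexings happening simultaneously — the fine-tail index shifts by one per step ($\ft_k(m_2)=\ft_{k+1}(m_1)$), and the variable-position index shifts because $\min$ is measured relative to a subring whose first variable is now $x_{j_1}$. The identity $(n-j_1+1)-\min^{[j_1,n]}(\cdot)=n-\min(\cdot)$ is what makes the two shifts cancel cleanly in the binomial numerator, and verifying it carefully (together with the edge case where $m_2$ is no longer divisible by $x_{j_1}$, handled by Lemma~\ref{l:reducequoseg} without affecting the formula) is the only real content beyond invoking Proposition~\ref{p:lexdecompspec}.
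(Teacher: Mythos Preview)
Your proposal is correct and follows essentially the same route as the paper's proof: induct on $\delta$, apply Proposition~\ref{p:lexdecompspec}(2) with $\gamma-1=j_1$ to split off the $\M{[j_1+1,n]}{\delta}$ summand as the $i=\delta$ term, then invoke the inductive hypothesis on $\Q{[j_1,n]}{\delta-1}{\ft_1(m_1)}$ and use $\ft_{(\delta-1)-i}(m_2)=\ft_{\delta-i}(m_1)$. The only cosmetic difference is that the paper passes to $\Q{[j_2,n]}{\delta-1}{m_2}$ (via Lemma~\ref{l:reducequoseg}) rather than staying at $[j_1,n]$, and suppresses the subring-$\min$ translation you spell out; your version is more explicit but not different in substance.
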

\begin{proof}
We induct on $\delta$.  We have $m_1\lexl x_{j_1+1}^\delta$, but $x_{j_1}^\delta\lexleq m_1$.  By Proposition \ref{p:lexdecompspec},

\[\Q{n}{\delta}{m_1}=\M{[j_1+1,n]}{\delta}\oplus x_{j_1}\Q{[j_2,n]}{\delta-1}{m_2},\]

\noindent where $m_2=\ft_1(m_1)=\frac{m_1}{x_{j_1}}$.  Thus,

\begin{align*}
\dim\Q{n}{\delta}{m_1}
&=\dim\M{[j_1+1,n]}{\delta}+\dim\Q{[j_2,n]}{\delta-1}{m_2} \\
&=\binom{\delta+(n-(j_1+1)+1)-1}{\delta}+\sum_{i=1}^{\delta-1}\binom{n-\min(\ft_{(\delta-1)-i}(m_2))+i-1}{i},
\end{align*}

\noindent by the inductive hypothesis.  Thus,

\begin{align*}
\dim\Q{n}{\delta}{m_1}
&=\binom{\delta+(n-(j_1+1)+1)-1}{\delta}+\sum_{i=1}^{\delta-1}\binom{n-\min(\ft_{(\delta-1)-i}(m_2))+i-1}{i} \\
&=\binom{n-\min(\ft_{\delta-\delta}(m_1))+\delta-1}{\delta}+\sum_{i=1}^{\delta-1}\binom{n-\min(\ft_{\delta-i}(m_1))+i-1}{i} \\
&=\sum_{i=1}^\delta\binom{n-\min(\ft_{\delta-i}(m_1))+i-1}{i},
\end{align*}

\noindent as desired.
\end{proof}

\begin{theorem}\label{t:quotcoeff}
Let $m$ be a degree-$\delta$ monomial in $n$ variables.  The $\ith{\delta}$-Macaulay coefficients of $\dim\Q{n}{\delta}{m}=\sum_i\binom{t_i}{i}$ are $t_i=n-\min(\ft_{\delta-i}(m))+i-1$ for each $i$.
\end{theorem}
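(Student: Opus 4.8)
The plan is to observe that Theorem~\ref{t:quotcoeff} is an immediate consequence of Lemma~\ref{l:quotrep} together with the uniqueness portion of Theorem~\ref{t:macrep}, exactly parallel to the way Theorem~\ref{t:idealcoeff} follows from Lemma~\ref{l:idealrep}. Lemma~\ref{l:quotrep} already expresses $\dim\Q{n}{\delta}{m}$ as a sum $\sum_{i=1}^{\delta}\binom{t_i}{i}$ with $t_i=n-\min(\ft_{\delta-i}(m))+i-1$, so the only thing left to check is that this is genuinely the $\ith{\delta}$ Macaulay representation, i.e. that the sequence $(t_\delta,t_{\delta-1},\ldots,t_1)$ is strictly decreasing (and that each $t_i\ge 0$, which is automatic since $\min(\ft_{\delta-i}(m))\le n$ forces $t_i\ge i-1\ge 0$ — noting that $\binom{t_i}{i}=0$ when $t_i=i-1<i$ is allowed by the convention in Theorem~\ref{t:macrep}). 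By the uniqueness statement in Theorem~\ref{t:macrep}, once monotonicity is verified the $t_i$ must be \emph{the} Macaulay coefficients.

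So the key step is to prove $t_{i+1}>t_i$ for $1\le i\le\delta-1$, which unwinds to $n-\min(\ft_{\delta-i-1}(m))+(i+1)-1 > n-\min(\ft_{\delta-i}(m))+i-1$, i.e. $\min(\ft_{\delta-i}(m))+1 > \min(\ft_{\delta-i-1}(m))$, equivalently $\min(\ft_{k-1}(m)) \le \min(\ft_{k}(m))$ where $k=\delta-i$ runs over $1,\ldots,\delta-1$. This last inequality is transparent from the definition of the fine tails: writing $m=x_{j_1}x_{j_2}\cdots x_{j_\delta}$ with $j_1\le j_2\le\cdots\le j_\delta$, we have $\ft_{k}(m)=x_{j_{k+1}}\cdots x_{j_\delta}$, so $\min(\ft_k(m))=j_{k+1}$, and hence $\min(\ft_{k-1}(m))=j_k\le j_{k+1}=\min(\ft_k(m))$ because the standard factorization indices are nondecreasing. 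I would also record the boundary cases explicitly (the top coefficient $t_\delta = n-\min(\ft_0(m))+\delta-1 = n-j_1+\delta-1$ and, when $m=x_n^\delta$, the fact that every $\ft_k(m)=x_n^{\delta-k}$ so $t_i=i-1$ and all the binomials vanish, consistent with $\Q{n}{\delta}{x_n^\delta}$ being trivial).

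I do not anticipate a genuine obstacle here; the proof is a two-line deduction. If anything, the subtle point worth a sentence of care is that Theorem~\ref{t:macrep} as stated permits a top index $s_p$ with $\binom{s_p}{p}=0$ (when $s=0$ all $s_i=i-1$), so strict monotonicity of the full tuple $(t_\delta,\ldots,t_1)$ — including when some trailing $t_i$ equal $i-1$ — still uniquely pins down the representation; this is exactly what the uniqueness argument in the proof of Theorem~\ref{t:macrep} delivers, since that argument never assumed the summands were nonzero. Thus the proof reads: ``By Lemma~\ref{l:quotrep}, $\dim\Q{n}{\delta}{m}=\sum_{i=1}^\delta\binom{t_i}{i}$ with $t_i=n-\min(\ft_{\delta-i}(m))+i-1$; writing $m=x_{j_1}\cdots x_{j_\delta}$ in standard factorization, $\min(\ft_{\delta-i}(m))=j_{\delta-i+1}$, so $t_{i+1}-t_i = 1-(j_{\delta-i}-j_{\delta-i+1})\ge 1>0$ since $j_{\delta-i}\le j_{\delta-i+1}$; hence $t_\delta>t_{\delta-1}>\cdots>t_1\ge 0$ and by the uniqueness in Theorem~\ref{t:macrep} these are the $\ith{\delta}$ Macaulay coefficients.''
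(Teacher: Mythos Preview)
Your proposal is correct and matches the paper's approach exactly: the paper states Theorem~\ref{t:quotcoeff} immediately after Lemma~\ref{l:quotrep} with no separate proof, treating it as a direct consequence (just as Theorem~\ref{t:idealcoeff} is declared to follow from Lemma~\ref{l:idealrep}). Your write-up is in fact more thorough than the paper's, since you explicitly verify the strict monotonicity $t_{i+1}-t_i=1+(j_{\delta-i+1}-j_{\delta-i})\ge 1$ needed to invoke the uniqueness in Theorem~\ref{t:macrep}, a step the paper leaves implicit.
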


\begin{definition}
We will call the sum $\sum_{i=1}^\delta\binom{t_i}{i}$ in Theorem \ref{t:quotcoeff} the \emph{$\ith{\delta}$ quotient representation of $m$} and the corresponding integers $(t_\delta,t_{\delta-1},\ldots,t_1)$ the \emph{$\ith{\delta}$ quotient coefficients of $m$}.
\end{definition}

\begin{example}
As in Example \ref{e:idcoeffcomp}, we obtain the $\ith{8}$ quotient coefficients $(t_i)=(12,11,9,6,5,4,1,0)$ of $m=a^2bd^3\!f^2$ using a method more direct than shown in Example \ref{e:quotrep}.

\begin{align*}
t_1&=6-\min(f)+1-1            & &=0 \\
t_2&=6-\min(f^2)+2-1          & &=1 \\
t_3&=6-\min(df^2)+3-1         & &=4 \\
t_4&=6-\min(d^2\!f^2)+4-1     & &=5 \\
t_5&=6-\min(d^3\!f^2)+5-1     & &=6 \\
t_6&=6-\min(bd^3\!f^2)+6-1    & &=9 \\
t_7&=6-\min(abd^3\!f^2)+7-1   & &=11 \\
t_8&=6-\min(a^2bd^3\!f^2)+8-1 & &=12 \\
\end{align*}
\end{example}

The language of coarse and fine tails also allow us to state a stronger version of Theorem \ref{t:longdecomp}.

\begin{stheorem}\label{st:strongdecomp}
Let $m$ be a degree-$\delta$ monomial in $n$ variables.  Then

\begin{align*}
\I{n}{\delta}{m}&=\bigoplus_{i=1}^{n-1}\frac{mx_i}{\ct_{i}(m)}\M{[i,n]}{\deg(\ct_i(m))-1}\text{\quad and} \\
\Q{n}{\delta}{m}&=\bigoplus_{i=1}^{\delta}\frac{m}{\ft_{i-1}(m)}\M{[\min(\ft_{i-1}(m))+1,n]}{\delta-i+1}.
\end{align*}
\end{stheorem}
\begin{proof}
    The proofs are very similar inductions to those of Lemmas \ref{l:idealrep} and \ref{l:quotrep} up to reindexing, without taking dimensions.
\end{proof}

We are now ready to present the main result.

\begin{notation}
For a monomial in $\M{n}{\delta}$ with $\ith{n}$ ideal coefficients $s_i$ and $\ith{\delta}$ quotient coefficients $t_i$, denote $\Sc{n}{\delta}{m}=\{s_i\ |\ 1\le i\le n-1\}$ and $\Tc{n}{\delta}{m}=\{t_i\ |\ 1\le i\le\delta\}$.
\end{notation}

\begin{example}\label{e:setpart}
Consider the monomial $a^2bd^3\!f^2$ as in Examples \ref{e:idrep} and \ref{e:quotrep}.  We have that $\Sc{6}{8}{a^2bd^3\!f^2}=\{10,8,7,3,2\}$ and $\Tc{6}{8}{a^2bd^3\!f^2}=\{12,11,9,6,5,4,1,0\}$.  Observe that not only are these disjoint sets, but their union is the set of all integers between $0$ and $12$.  A similar statement is true for other monomials.  If we consider the monomial $b^2cd$ in $k[a,b,c,d]$, we have $\Sc{4}{4}{b^2cd}=\{6,3,1\}$ and $\Tc{4}{4}{b^2cd}=\{5,4,2,0\}$.
\end{example}

\begin{example}\label{e:coeffinherit}
We can observe how $\Sc{n}{\delta}{m}$ and $\Tc{n}{\delta}{m}$ are affected when we change the value of $n$ or $\delta$.  Consider the monomial $e^2\!fg$ in $k[a,b,c,d,e,f,g]$, which is obtained from $b^2cd$ by raising each variable index by $3$.  Its quotient coefficients $(5,4,2,0)$ are the same as those for $b^2cd$, but it has three additional ideal coefficients: $(9,8,7,6,3,1)$.  The monomial $a^3b^2cd$ in $k[a,b,c,d]$, obtained from $b^2cd$ by multiplying by $a^3$, has identical ideal coefficients $(6,3,1)$ to $b^2cd$, but quotient coefficients $(9,8,7,5,4,2,0)$.  In both cases, we have raised $n$ or $\delta$ by $3$, adding three more terms to the corresponding  representation while leaving the other unaffected.  The phenomenon observed in Example \ref{e:setpart} remains true for each, however.
\end{example}

Examples \ref{e:setpart} and \ref{e:coeffinherit} are not coincidental.

\begin{theorem}\label{t:setpart}
Let $n,\delta\ge1$ and $m\in\M{n}{\delta}$.  Then $\{\Sc{n}{\delta}{m},\Tc{n}{\delta}{m}\}$ forms a set partition of $\{0,1,\ldots,n+\delta-2\}$.
\end{theorem}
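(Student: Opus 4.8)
The plan is to use the explicit formulas for the ideal and quotient coefficients from Theorems~\ref{t:idealcoeff} and~\ref{t:quotcoeff} together with the standard factorization $m = x_{j_1}\cdots x_{j_\delta}$. Recall $s_i = i + \deg(\ct_{n-i}(m)) - 1$ for $1\le i\le n-1$, and $t_i = n - \min(\ft_{\delta-i}(m)) + i - 1$ for $1\le i\le\delta$. First I would check the two sets have the right total size and lie in the right range: there are $(n-1) + \delta = n+\delta-1$ coefficients in all, which matches $|\{0,1,\ldots,n+\delta-2\}|$, so it suffices to prove \emph{either} disjointness \emph{or} that every value in $\{0,\ldots,n+\delta-2\}$ is hit; each then implies the other. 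I expect it is cleanest to prove surjectivity onto $\{0,\ldots,n+\delta-2\}$ and deduce disjointness for free.

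The key reformulation I would push is to reindex both coefficient sequences by the ``cut point'' in a single combined combinatorial object. Think of building $m$ by reading its standard factorization $x_{j_1}\cdots x_{j_\delta}$ from left to right; equivalently, track the partial exponent vectors. For each $k$ with $0\le k\le n+\delta-2$, I want to decide whether $k$ is an ideal coefficient or a quotient coefficient, and the guess is that this is governed by a lattice-path / staircase picture: the exponent vector of $m$ traces a monotone path, the ideal coefficients record the ``horizontal run lengths'' (how $\deg\ct_{n-i}(m)$ grows as $i$ increases, i.e.\ as we peel off leading variables) and the quotient coefficients record the ``vertical positions'' (how $\min\ft_{\delta-i}(m)$ behaves as $i$ increases, i.e.\ as we peel off trailing factors). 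Concretely, as $i$ runs from $1$ to $n-1$, $s_i = i - 1 + \deg(\ct_{n-i}(m))$ is strictly increasing (this is already implicit in Corollary~\ref{c:decompdimen}/the proof of Theorem~\ref{t:macrep}), and as $i$ runs from $1$ to $\delta$, $t_i = i - 1 + (n - \min(\ft_{\delta-i}(m)))$ is strictly increasing. So $\Sc{n}{\delta}{m}$ and $\Tc{n}{\delta}{m}$ are each already-sorted; the task is to interleave them and show no collisions and no gaps.

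The step I expect to be the main obstacle — and the heart of the argument — is pinning down exactly when the ``$+i$'' increments in the $s$-sequence and the $t$-sequence produce the same integer, and showing they never do. I would set this up by induction on $n + \delta$ using the structural recursions already available: Structure Theorem~\ref{st:strongdecomp} (or just Proposition~\ref{p:lexdecompspec} applied in both the ideal and quotient directions) splits off one variable (for the ideal side) or one degree (for the quotient side). Passing from $m$ in $\M{n}{\delta}$ to the smaller monomial $\frac{m}{x_1^{\alpha_1}}$ in $\M{[2,n]}{\delta-\alpha_1}$ shifts everything consistently: by Example~\ref{e:coeffinherit}'s phenomenon, lowering $n$ by $1$ removes exactly one ideal coefficient (the top one, $s_{n-1}$) and leaves the quotient coefficients governed by the same $\min(\ft)$ data up to the global shift, and the induction hypothesis gives a set partition of $\{0,\ldots,n+\delta-3\}$ for the smaller monomial. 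The delicate bookkeeping is tracking how the ``$[2,n]$ vs $[1,n]$'' reindexing interacts with the $n-\min(\ft_{\delta-i})$ expression and with the degree drop $\delta \to \delta - \alpha_1$; I would handle the base cases $n=1$ (where $\I{1}{\delta}{m}$ is trivial, $\Sc{}{}{}=\emptyset$, and the quotient coefficients must be exactly $\{0,1,\ldots,\delta-1\}$ since $\min(\ft_{\delta-i}(x_1^\delta)) = 1$) and $\delta=1$ symmetrically, then verify that the single removed coefficient on one side plus the shifted partition on the other reassembles into a partition of $\{0,\ldots,n+\delta-2\}$. A cleaner alternative, if the induction proves fiddly, is a direct double-counting: exhibit an explicit bijection $\{0,1,\ldots,n+\delta-2\} \to \{\text{cells of the staircase determined by }m\}$ under which the ideal coefficients are the image of the ``row labels'' and the quotient coefficients the image of the ``column labels'' of a single monotone lattice path from $(0,0)$ to $(n-1,\delta)$ read off from the exponent vector of $m$ — the complementarity of rows and columns along such a path is then visibly a set partition, and the formulas for $s_i$ and $t_i$ are exactly the coordinates recorded at the $i$-th step.
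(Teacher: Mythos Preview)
Your overall framework---induction on $n+\delta$, together with the counting observation that $|\mathcal{S}|+|\mathcal{T}|=(n-1)+\delta$ so disjointness and surjectivity are equivalent---is exactly the paper's approach. The difference lies in the inductive step. You propose passing from $m$ to $m/x_1^{\alpha_1}$ in $\M{[2,n]}{\delta-\alpha_1}$ via Proposition~\ref{p:lexdecompspec}; but this reduces $n+\delta$ by $\alpha_1+1$, not by $1$, so your claim that ``the induction hypothesis gives a set partition of $\{0,\ldots,n+\delta-3\}$ for the smaller monomial'' is only correct when $\alpha_1=0$. In general the smaller monomial partitions $\{0,\ldots,n+\delta-\alpha_1-3\}$, and you would need to account for $\alpha_1+1$ new values at the top, not one. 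This is fixable but is precisely the ``delicate bookkeeping'' you flag.

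The paper sidesteps this by a cleaner single-step dichotomy, isolated as Lemma~\ref{l:coeffinherit}: either $x_1\mid m$, in which case $m=x_1m'$ with $m'\in\M{n}{\delta-1}$, and one checks directly from Theorems~\ref{t:idealcoeff} and~\ref{t:quotcoeff} that $\Sc{n}{\delta}{m}=\Sc{n}{\delta-1}{m'}$ while $\Tc{n}{\delta}{m}=\Tc{n}{\delta-1}{m'}\sqcup\{n+\delta-2\}$; or $x_1\nmid m$, in which case $m=\sigma_1(m')$ for $m'\in\M{n-1}{\delta}$, and symmetrically $\Tc{n}{\delta}{m}=\Tc{n-1}{\delta}{m'}$ while $\Sc{n}{\delta}{m}=\Sc{n-1}{\delta}{m'}\sqcup\{n+\delta-2\}$. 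Each case reduces $n+\delta$ by exactly one and appends a single new top element to exactly one side, so the induction is immediate. Your lattice-path alternative is a genuinely different route the paper does not pursue; it would amount to reading the two operations above as the horizontal and vertical steps of a monotone path from $(0,0)$ to $(n-1,\delta)$, and is a perfectly viable direct proof once the bijection is written down carefully.
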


Before the proof of the theorem, we need a lemma involving a shifting operation on monomials.

\begin{definition}
Let $m=x_{j_1}x_{j_2}\cdots x_{j_\delta}$ in $\M{n}{\delta}$ and define $\sigma_i(m):=x_{j_1+i}x_{j_2+i}\cdots x_{j_\delta+i}$ in $\M{n+i}{\delta}$.  We will call $\sigma_i(m)$ the \emph{$\ith{i}$ shift of $m$}.
\end{definition}

\begin{lemma}\label{l:coeffinherit}
Let $m=x_1^{\alpha_1}x_2^{\alpha_2}\cdots x_n^{\alpha_n}=x_{j_1}x_{j_2}\cdots x_{j_\delta}$ be a degree-$\delta$ monomial in $k[x_1,\ldots,x_n]$.  Then
\begin{enumerate}
    \item[(1)] $\Sc{n}{\delta+1}{x_1m}=\Sc{n}{\delta}{m}$ and $\Tc{n}{\delta+1}{x_1m}=\Tc{n}{\delta}{m}\sqcup\{n+\delta-1\}$.
    \item[(2)] $\Tc{n+1}{\delta}{\sigma_1(m)}=\Tc{n}{\delta}{m}$ and $\Sc{n+1}{\delta}{\sigma_1(m)}=\Sc{n}{\delta}{m}\sqcup\{n+\delta-1\}$.
\end{enumerate}
\end{lemma}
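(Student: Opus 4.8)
The plan is to prove all four set identities by direct substitution into the explicit coefficient formulas of Theorems~\ref{t:idealcoeff} and~\ref{t:quotcoeff}, namely $s_i=i+\deg(\ct_{n-i}(m))-1$ for the $\ith{n}$ ideal coefficients and $t_i=n-\min(\ft_{\delta-i}(m))+i-1$ for the $\ith{\delta}$ quotient coefficients, once we record how the coarse and fine tails behave under the operations $m\mapsto x_1m$ and $m\mapsto\sigma_1(m)$.

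First I would collect the relevant tail identities. Multiplying by $x_1$ changes only the $x_1$-exponent, so $\ct_j(x_1m)=\ct_j(m)$ for every $j\ge1$; on the other hand the standard factorization of $x_1m$ is obtained from that of $m$ by prepending an $x_1$, so $\ft_\ell(x_1m)=\ft_{\ell-1}(m)$ for $\ell\ge1$ while $\ft_0(x_1m)=x_1m$ has minimum $1$. Shifting indices commutes with taking fine tails, $\ft_\ell(\sigma_1(m))=\sigma_1(\ft_\ell(m))$, so $\min(\ft_\ell(\sigma_1(m)))=\min(\ft_\ell(m))+1$; and for coarse tails one checks $\ct_j(\sigma_1(m))=\sigma_1(\ct_{j-1}(m))$ for $j\ge1$ (in particular $\ct_1(\sigma_1(m))=\sigma_1(m)$, which has degree $\delta$), whence $\deg(\ct_j(\sigma_1(m)))=\deg(\ct_{j-1}(m))$.

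Now I would substitute. For part~(1): the $\ith{n}$ ideal coefficients of $x_1m$ are $i+\deg(\ct_{n-i}(x_1m))-1$ for $1\le i\le n-1$; since $n-i\ge1$ throughout this range each $\ct_{n-i}(x_1m)=\ct_{n-i}(m)$, so these coincide with the $s_i$ and $\Sc{n}{\delta+1}{x_1m}=\Sc{n}{\delta}{m}$. The $\ith{(\delta+1)}$ quotient coefficients of $x_1m$ are $n-\min(\ft_{(\delta+1)-i}(x_1m))+i-1$ for $1\le i\le\delta+1$; when $i\le\delta$ the index $(\delta+1)-i$ is $\ge1$, so this equals $n-\min(\ft_{\delta-i}(m))+i-1=t_i$, while for $i=\delta+1$ it equals $n-1+(\delta+1)-1=n+\delta-1$. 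Part~(2) runs in parallel: the $\ith{\delta}$ quotient coefficients of $\sigma_1(m)$ are $(n+1)-(\min(\ft_{\delta-i}(m))+1)+i-1=t_i$ for all $1\le i\le\delta$, giving $\Tc{n+1}{\delta}{\sigma_1(m)}=\Tc{n}{\delta}{m}$; and the $\ith{(n+1)}$ ideal coefficients of $\sigma_1(m)$ are $i+\deg(\ct_{n+1-i}(\sigma_1(m)))-1$ for $1\le i\le n$, which for $1\le i\le n-1$ equals $i+\deg(\ct_{n-i}(m))-1=s_i$ and for $i=n$ equals $n+\deg(\ct_1(\sigma_1(m)))-1=n+\delta-1$.

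It remains to see that the two unions are disjoint. In each case the extra value $n+\delta-1$ occurs as the largest coefficient of the relevant Macaulay representation of the new monomial---the $\ith{(\delta+1)}$ quotient representation of $x_1m$ in part~(1), and the $\ith{(n+1)}$ ideal representation of $\sigma_1(m)$ in part~(2)---and since Macaulay coefficients strictly decrease (Theorem~\ref{t:macrep}), $n+\delta-1$ exceeds every other coefficient in that list and in particular is not among those of $m$. I expect no serious obstacle here: the argument is essentially a substitution, and the only places needing care are the two boundary indices $i=\delta+1$ and $i=n$, where the relevant tail degenerates to the whole monomial and contributes the new element, together with keeping track of which index ranges remain valid after each operation.
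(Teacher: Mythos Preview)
Your proof is correct and follows essentially the same approach as the paper: both substitute directly into the formulas of Theorems~\ref{t:idealcoeff} and~\ref{t:quotcoeff} after recording how $\ct$ and $\ft$ behave under $m\mapsto x_1m$ and $m\mapsto\sigma_1(m)$. The only cosmetic difference is the disjointness step---the paper bounds each old coefficient by $n+\delta-2$ directly from the formula, while you invoke strict monotonicity of Macaulay coefficients to the same effect.
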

\begin{proof}\leavevmode
\begin{enumerate}
    \item[(1)] We have $\ct_{n-i}(x_1^{\alpha_1+1}x_2^{\alpha_2}\cdots x_n^{\alpha_n})=x_{n-i+1}^{\alpha_{n-i+1}}\cdots x_n^{\alpha_n}=\ct_{n-i}(m)$ since $n-i+1\ge2$ for each $1\le i\le n-1$.  Thus, $\Sc{n}{\delta+1}{x_1m}=\Sc{n}{\delta}{m}$ by Theorem \ref{t:idealcoeff}.

    We have $\ft_{\delta+1-i}(x_1m)=x_{j_{\delta-i+2}}\cdots x_{j_\delta}=\ft_{\delta-i}(m)$ for each $1\le i\le\delta$, since $\delta+1-i\ge1$, and $\ft_{\delta+1-i}(x_1m)=x_1m$ when $i=\delta+1$.  Thus, $\Tc{n}{\delta+1}{x_1m}=\Tc{n}{\delta}{m}\cup\{n+\delta-1\}$ by Theorem \ref{t:quotcoeff}.  Since $i-\min(\ft_{\delta-i}(m))\le i-1\le\delta-1$ for all $1\le i\le\delta$, $n-\min(\ft_{\delta-i}(m))+i-1\le n+\delta-2$, so this union is disjoint.
    
    \item[(2)] We have $n+1-\min(\ft_{\delta-i}(\sigma_1(m)))=n+1-\min(x_{j_{\delta-i+1}+1}\cdots x_{j_\delta+1})=n+1-j_{\delta-i+1}-1=n-j_{\delta-i+1}=n-\min(\ft_{\delta-i}(m))$.  Thus, $\Tc{n+1}{\delta}{\sigma_1(m)}=\Tc{n}{\delta}{m}$ by Theorem \ref{t:quotcoeff}.

    We have $\deg(\ct_{n+1-i}(\sigma_1(m)))=\deg(x_{n+2-i}^{\alpha_{n+1-i}}\cdots x_{n+1}^{\alpha_n})=\deg(\ct_{n-i}(m))$ for each $1\le i\le n-1$, since $n+1-i\ge2$, and $\deg(\ct_{n+1-i}(\sigma_1(m)))=\delta$ when $i=n$.  Thus, $\Sc{n+1}{\delta}{\sigma_1(m)}=\Sc{n}{\delta}{m}\cup\{n+\delta-1\}$ by Theorem \ref{t:idealcoeff}.  Since $i+\deg(\ct_{n-i}(m))\le i+\delta\le n-1+\delta$ for all $1\le i\le n-1$, $i+\deg(\ct_{n-i}(m))-1\le n+\delta-2$, so this union is disjoint.\qedhere
\end{enumerate}
\end{proof}

\begin{proof}[Proof of Theorem \ref{t:setpart}]
Let $\mathcal{S}=\Sc{n}{\delta}{m}$ and $\mathcal{T}=\Tc{n}{\delta}{m}$.  We will induct on the quantity $n+\delta$.  Suppose the statement holds for all degree-$\delta'$ monomials in $n'$ variables such that $n'+\delta'=n+\delta-1$.  Consider the following two cases:
\begin{enumerate}
    \item[(1)] $x_1\,\big|\,m$.  Let $m'\in\M{n}{\delta-1}$ so that $x_1m'=m$ and let $\mathcal{S}',\mathcal{T}'$ respectively be the sets of $\ith{n}$ and $\ith{(\delta-1)}$ ideal and quotient coefficients $s_i',t_j'$ of $m'$.  By the inductive hypothesis, $\mathcal{S}'$ and $\mathcal{T}'$ are disjoint with $\mathcal{S}'\sqcup\mathcal{T}'=\{0,1,\ldots,n+\delta-3\}$.  By Lemma \ref{l:coeffinherit}, $\mathcal{S}=\mathcal{S}'$ and $\mathcal{T}=\mathcal{T}'\sqcup\{n+\delta-2\}$. 
 Since $n+\delta-2\notin\mathcal{S}$, $\mathcal{S}$ is disjoint from $\mathcal{T}$ with $\mathcal{S}\sqcup\mathcal{T}=\mathcal{S}'\sqcup\mathcal{T}'\sqcup\{n+\delta-2\}=\{0,1,\ldots,n+\delta-2\}$.
    \item[(2)] $x_1\nmid m$.  Let $m'\in\M{n-1}{\delta}$ so that $\sigma_1(m')=m$ and let $\mathcal{S}',\mathcal{T}'$ respectively be the sets of $\ith{(n-1)}$ and $\ith{\delta}$ ideal and quotient coefficients $s_i',t_j'$ of $m'$.  By the inductive hypothesis, $\mathcal{S}'$ and $\mathcal{T}'$ are disjoint with $\mathcal{S}'\sqcup\mathcal{T}'=\{0,1,\ldots,n+\delta-3\}$.  By Lemma \ref{l:coeffinherit}, $\mathcal{T}=\mathcal{T}'$ and $\mathcal{S}=\mathcal{S}'\sqcup\{n+\delta-2\}$.  Since $n+\delta-2\notin\mathcal{T}$, $\mathcal{S}$ is disjoint from $\mathcal{T}$ with $\mathcal{S}\sqcup\mathcal{T}=\mathcal{S}'\sqcup\mathcal{T}'\sqcup\{n+\delta-2\}=\{0,1,\ldots,n+\delta-2\}$.\qedhere
    \end{enumerate}
\end{proof}

\begin{corollary}
Let $U=\{0,\ldots,n+\delta-2\}$ and $m\in\M{n}{\delta}$.  Then $\Sc{n}{\delta}{m}=U\setminus\Tc{n}{\delta}{m}$.  In other words, if one set of coefficients is known, the other is obtained by taking its complement in $U$.
\end{corollary}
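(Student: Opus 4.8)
The plan is to derive this as an immediate consequence of Theorem~\ref{t:setpart}. Recall that saying $\{\Sc{n}{\delta}{m},\Tc{n}{\delta}{m}\}$ is a set partition of $U=\{0,1,\ldots,n+\delta-2\}$ amounts to exactly two assertions: the two blocks are disjoint, and their union is all of $U$. I would first extract the disjointness, $\Sc{n}{\delta}{m}\cap\Tc{n}{\delta}{m}=\emptyset$, which is equivalent to the inclusion $\Sc{n}{\delta}{m}\subseteq U\setminus\Tc{n}{\delta}{m}$ (using that $\Sc{n}{\delta}{m}\subseteq U$). Then I would invoke the covering property $\Sc{n}{\delta}{m}\cup\Tc{n}{\delta}{m}=U$: any element of $U$ that is not in $\Tc{n}{\delta}{m}$ must lie in $\Sc{n}{\delta}{m}$, giving the reverse inclusion $U\setminus\Tc{n}{\delta}{m}\subseteq\Sc{n}{\delta}{m}$. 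Combining these yields $\Sc{n}{\delta}{m}=U\setminus\Tc{n}{\delta}{m}$, and the symmetric identity $\Tc{n}{\delta}{m}=U\setminus\Sc{n}{\delta}{m}$ follows by the same two-inclusion argument (or simply by complementing again in $U$).

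There is essentially no obstacle: the corollary is a one-line restatement of Theorem~\ref{t:setpart} in complementary form. The only point deserving a moment's care is the degenerate case $n=1$, where $\Sc{1}{\delta}{m}=\emptyset$ and one might worry that the word ``partition'' in Theorem~\ref{t:setpart} is sometimes read as requiring both blocks nonempty. In that situation I would note that Theorem~\ref{t:quotcoeff} gives $t_i=1-\min(\ft_{\delta-i}(m))+i-1=i-1$ for $1\le i\le\delta$, so $\Tc{1}{\delta}{m}=\{0,1,\ldots,\delta-1\}=U$ and the displayed identity still holds with both sides empty. Apart from this bookkeeping, the proof is complete once Theorem~\ref{t:setpart} is cited.
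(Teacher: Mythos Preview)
Your proposal is correct and matches the paper's approach: the corollary is stated without proof immediately after Theorem~\ref{t:setpart}, so it is intended as an immediate restatement of that theorem in complementary form, exactly as you argue. Your extra remark on the $n=1$ case is a reasonable bit of care but not something the paper addresses.
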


\begin{corollary}\label{c:monobijpart}
Given $p\in\Z^+$ such that $p=n+\delta-2$, with $n\ge2$ and $\delta\ge1$, and a subset $\mathcal{S}\subseteq U=\{0,\ldots,p\}$ with $|\mathcal{S}|=n-1$, there exists a unique degree-$\delta$ monomial $m$ in $n$ variables with $\Sc{n}{\delta}{m}=\mathcal{S}$ and $\Tc{n}{\delta}{m}=U\setminus\mathcal{S}$.  This demonstrates a bijective correspondence between the set of degree-$\delta$ monomials in $n$ variables and the subsets of $U$ of size $n-1$ given by $m\longleftrightarrow\Sc{n}{\delta}{m}$.
\end{corollary}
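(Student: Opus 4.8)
The plan is to recognize the statement as saying that the assignment $F\colon m\mapsto\Sc{n}{\delta}{m}$ is a bijection from the set of degree-$\delta$ monomials in $n$ variables onto the family $\binom{U}{n-1}$ of $(n-1)$-element subsets of $U=\{0,\ldots,n+\delta-2\}$. Once $F$ is shown to be a well-defined bijection, the clause about $\Tc{n}{\delta}{m}$ is immediate: Theorem \ref{t:setpart} says $\{\Sc{n}{\delta}{m},\Tc{n}{\delta}{m}\}$ partitions $U$, so $\Sc{n}{\delta}{m}=\mathcal{S}$ forces $\Tc{n}{\delta}{m}=U\setminus\mathcal{S}$. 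So the real content is well-definedness, injectivity, and surjectivity of $F$.

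For well-definedness: by Theorem \ref{t:idealcoeff} the integers $s_i=i+\deg(\ct_{n-i}(m))-1$ are the $\ith{(n-1)}$ Macaulay coefficients of $\dim\I{n}{\delta}{m}$, so they form a strictly decreasing sequence $s_{n-1}>\cdots>s_1$; in particular $\Sc{n}{\delta}{m}$ has exactly $n-1$ elements, and by Theorem \ref{t:setpart} they all lie in $U$, so $\Sc{n}{\delta}{m}\in\binom{U}{n-1}$. For injectivity: from the \emph{set} $\Sc{n}{\delta}{m}$ one recovers the ordered tuple $(s_{n-1},\ldots,s_1)$ by sorting in decreasing order (unambiguous precisely because the coefficients are strictly decreasing), hence recovers $\dim\I{n}{\delta}{m}=\sum_{i=1}^{n-1}\binom{s_i}{i}$. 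But $m\mapsto\dim\I{n}{\delta}{m}$ is itself injective on $\M{n}{\delta}$: if $m\lexl m'$ then $\I{n}{\delta}{m}\subsetneq\I{n}{\delta}{m'}$ by transitivity of the lex order (any $\mu$ with $\mu\lexl m$ also has $\mu\lexl m'$, while $m$ lies in the larger segment but not the smaller), so $\dim\I{n}{\delta}{m}<\dim\I{n}{\delta}{m'}$. Composing, $F$ is injective.

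For surjectivity I would simply count: the number of degree-$\delta$ monomials in $n$ variables is $\binom{n+\delta-1}{\delta}=\binom{n+\delta-1}{n-1}$, and since $|U|=n+\delta-1$ this equals $\left|\binom{U}{n-1}\right|$; an injection between finite sets of equal cardinality is a bijection. Thus each $\mathcal{S}\in\binom{U}{n-1}$ is $\Sc{n}{\delta}{m}$ for a unique $m$, and $\Tc{n}{\delta}{m}=U\setminus\mathcal{S}$ as noted. (One could instead argue surjectivity directly: given $\mathcal{S}$, sort it decreasingly as $(s_{n-1},\ldots,s_1)$, set $s=\sum_i\binom{s_i}{i}$, observe via the hockey-stick identity that $s$ is maximal when $\mathcal{S}=\{\delta,\delta+1,\ldots,n+\delta-2\}$, giving $s=\binom{n+\delta-1}{\delta}-1<\dim\M{n}{\delta}$, so some $m\in\M{n}{\delta}$ has exactly $s$ monomials lex-preceding it, i.e.\ $\dim\I{n}{\delta}{m}=s$; uniqueness of the Macaulay representation in Theorem \ref{t:macrep} then forces $\Sc{n}{\delta}{m}=\mathcal{S}$.) I do not anticipate a substantive obstacle, since everything reduces to results already established; the only points needing a bit of care are that the ideal coefficients are pairwise distinct—used both to conclude $F$ lands in $\binom{U}{n-1}$ and to recover the Macaulay tuple from its underlying set—and, if one takes the direct route to surjectivity, the elementary hockey-stick estimate bounding $s$ below $\dim\M{n}{\delta}$.
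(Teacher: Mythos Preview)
Your proof is correct. The paper states Corollary~\ref{c:monobijpart} without a formal proof, treating it as immediate from Theorem~\ref{t:setpart}; the text following the corollary then supplies an explicit construction of the inverse map (take the $\ith{q}$ monomial in the lex order, with $q=1+\sum_i\binom{s_i}{i}$), and the subsequent Proposition gives closed-form expressions for the exponents $\alpha_i$ (equivalently the indices $j_i$) in terms of the $s_i$ (or $t_i$). So the paper's implicit route to surjectivity is constructive, whereas your primary argument is the counting argument $|\M{n}{\delta}|=\binom{n+\delta-1}{n-1}=\left|\binom{U}{n-1}\right|$. Both are valid; your counting argument is shorter and avoids any estimate, while the paper's explicit inverse has the advantage of actually producing $m$ from $\mathcal{S}$. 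Your alternative direct argument is essentially the paper's construction, and your injectivity argument (recover the ordered tuple, then $\dim\I{n}{\delta}{m}$, then $m$) is morally the same as the paper's reconstruction formulas, just phrased abstractly rather than via the explicit expressions $\alpha_i=s_{n-i+1}-s_{n-i}-1$.
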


Such a monomial $m$ in Corollary \ref{c:monobijpart} may be constructed: let $\mathcal{S}\subsetneq U$ be nonempty and proper.  Let $n=|\mathcal{S}|+1$ and $\delta=p-n+2$.  Additionally, let $s_i\in\mathcal{S},t_i\in U\setminus\mathcal{S}$ be the $\ith{i}$ element in their respective sets in increasing order.  The monomial $m\in\M{n}{\delta}$ with ideal coefficients $\mathcal{S}$ and quotient coefficients $U\setminus\mathcal{S}$ is the $\ith{q}$ monomial generator of $\M{n}{\delta}$ in the lex order, where

\[q=1+\sum_{i=1}^{n-1}\binom{s_i}{i}\quad\text{or}\quad q=\binom{n+\delta-1}{\delta}-\sum_{i=1}^\delta\binom{t_i}{i}.\]

The monomial itself may be obtained using either the ideal or quotient coefficients.  First, we prove a proposition:

\begin{proposition}
Let $m=x_1^{\alpha_1}x_2^{\alpha_2}\cdots x_n^{\alpha_n}$ have $\ith{n}$ ideal coefficients $s_i$.  Then:
\begin{align*}
\alpha_1&=n+\delta-2-s_{n-1}, \\
\alpha_n&=s_1,\quad\text{and} \\
\alpha_i&=s_{n-i+1}-s_{n-i}-1
\end{align*}
for $2\le i\le n-1$.  If $m=x_{j_1}x_{j_2}\cdots x_{j_\delta}$ has $\ith{\delta}$ quotient coefficients $t_i$, then:
\begin{align*}
j_i=n-t_{\delta-i+1}+\delta-i
\end{align*}
for $1\le i\le\delta$.
\end{proposition}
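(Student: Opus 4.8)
The plan is to substitute the explicit formulas for the Macaulay coefficients from Theorems \ref{t:idealcoeff} and \ref{t:quotcoeff} into the definitions of the coarse and fine tails, and then unwind. For the ideal half, recall that $s_i = i + \deg(\ct_{n-i}(m)) - 1$, where $\ct_{n-i}(m) = x_{n-i+1}^{\alpha_{n-i+1}} x_{n-i+2}^{\alpha_{n-i+2}}\cdots x_n^{\alpha_n}$, so that $\deg(\ct_{n-i}(m)) = \sum_{\ell = n-i+1}^{n} \alpha_\ell$. Everything then follows from this one identity together with the observation that consecutive coarse tails differ by a single power of a variable.

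First I would dispatch the two boundary indices. Taking $i = 1$ gives $s_1 = 1 + \deg(\ct_{n-1}(m)) - 1 = \deg(\ct_{n-1}(m)) = \alpha_n$. Taking $i = n-1$ gives $s_{n-1} = (n-1) + \deg(\ct_1(m)) - 1 = (n-2) + \deg(\ct_1(m))$; since $\ct_1(m) = m/x_1^{\alpha_1}$ has degree $\delta - \alpha_1$, this rearranges to $\alpha_1 = n + \delta - 2 - s_{n-1}$. For an interior index $2 \le i \le n-1$, the key step is the telescoping identity $\deg(\ct_{i-1}(m)) - \deg(\ct_i(m)) = \alpha_i$, which is immediate since $\ct_{i-1}(m)$ and $\ct_i(m)$ differ precisely by the factor $x_i^{\alpha_i}$. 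Then $s_{n-i+1} - s_{n-i} = \bigl[(n-i+1) - (n-i)\bigr] + \bigl[\deg(\ct_{i-1}(m)) - \deg(\ct_i(m))\bigr] = 1 + \alpha_i$, giving $\alpha_i = s_{n-i+1} - s_{n-i} - 1$.

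For the quotient half, recall $t_i = n - \min(\ft_{\delta-i}(m)) + i - 1$, where $\ft_{\delta-i}(m) = x_{j_{\delta-i+1}} x_{j_{\delta-i+2}} \cdots x_{j_\delta}$, so that $\min(\ft_{\delta-i}(m)) = j_{\delta-i+1}$ for every $1 \le i \le \delta$ (this includes $i = \delta$, where $\ft_0(m) = m$ and the minimum is $j_1$, matching the formula). Solving $t_i = n - j_{\delta-i+1} + i - 1$ for the index yields $j_{\delta-i+1} = n - t_i + i - 1$, and substituting $k = \delta - i + 1$ gives $j_k = n - t_{\delta-k+1} + \delta - k$, which is the stated formula after renaming $k$ to $i$.

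No step here presents a genuine obstacle; this is a direct computation. The only point requiring a little care is checking that, for $1 \le i \le n-1$ and $1 \le i \le \delta$ respectively, the coarse and fine tails appearing above are honest (possibly degree-zero, but never undefined) monomials, so that Theorems \ref{t:idealcoeff} and \ref{t:quotcoeff} apply verbatim, and that the boundary cases $n = 2$ (where the range $2 \le i \le n-1$ is empty) and $i = \delta$ are consistent with the general formulas — which the computations above already confirm.
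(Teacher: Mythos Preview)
Your proposal is correct and follows essentially the same approach as the paper: both arguments substitute the formulas from Theorems \ref{t:idealcoeff} and \ref{t:quotcoeff}, read off the boundary cases directly, obtain the interior $\alpha_i$ from the telescoping identity $\deg(\ct_{i-1}(m))-\deg(\ct_i(m))=\alpha_i$, and recover $j_i$ by solving $t_i=n-\min(\ft_{\delta-i}(m))+i-1$ after the substitution $i\mapsto\delta-i+1$. The paper's write-up is simply terser.
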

\begin{proof}
For the ideal case, the first two statements follow directly from the formula in Theorem \ref{t:idealcoeff}.  Since $s_\ell=\ell+\deg(\ct_{n-\ell}(m))-1$ for $2\le\ell\le n-1$, $\deg(\ct_{n-\ell}(m))=s_\ell-\ell+1$.  Now, $\alpha_{n-\ell+1}=\deg(\ct_{n-\ell}(m))-\deg(\ct_{n-\ell+1}(m))=s_\ell-s_{\ell-1}-1$.  Setting $i=n-\ell+1$, we have $\alpha_i=s_{n-i+1}-s_{n-i}-1$ for $2\le i\le n-1$.

For the quotient case, apply Theorem \ref{t:quotcoeff} with $i=\delta-\ell+1$ and isolate $\min(\ft_{\ell-1}(m))=j_\ell$.
\end{proof}

\begin{example}
Let $n=\delta=4$ and $m=b^2cd\in k[a,b,c,d]$.  In Example \ref{e:setpart}, we computed $\Sc{4}{4}{m}=\{6,3,1\}$ and $\Tc{4}{4}{m}=\{5,4,2,0\}$.  To reconstruct $m$ using $\Sc{4}{4}{m}$, set $s_1=1$, $s_2=3$, and $s_3=6$.  Thus, $\alpha_4=s_1=1$, $\alpha_3=s_2-s_1-1=1$, and $\alpha_2=s_3-s_2-1=2$.  We have $m=a^{\alpha_1}b^2cd$ where $\alpha_1$ is chosen to make $\delta=4$, so $\alpha_1=0$.  Therefore, $m=b^2cd$.

To obtain $m$ using $\Tc{4}{4}{m}$, set $t_1=0$, $t_2=2$, $t_3=4$, and $t_4=5$.  Then
\begin{align*}
j_1&=4-t_4+4-1=2, \\
j_2&=4-t_3+4-2=2, \\
j_3&=4-t_2+4-3=3,\quad\text{and} \\
j_4&=4-t_1+4-4=4.
\end{align*}
These indices respectively correspond to the variables $b$, $b$, $c$, and $d$ within $k[a,b,c,d]$, so $m=b^2cd$.
\end{example}

\begin{bibdiv}
\begin{biblist}

\bib{CavigliaKummini}{article}{
   author={Caviglia, Giulio},
   author={Kummini, Manoj},
   title={Poset embeddings of Hilbert functions},
   journal={Math. Z.},
   volume={274},
   date={2013},
   number={3-4},
   pages={805--819},
   issn={0025-5874},
   review={\MR{3078248}},
   doi={10.1007/s00209-012-1097-6},
}

\bib{ClementsLindstrom}{article}{
   author={Clements, G. F.},
   author={Lindstr\"{o}m, B.},
   title={A generalization of a combinatorial theorem of Macaulay},
   journal={J. Combinatorial Theory},
   volume={7},
   date={1969},
   pages={230--238},
   issn={0021-9800},
   review={\MR{0246781}},
}

\bib{CooperRoberts}{article}{
   author={Cooper, Susan M.},
   author={Roberts, Leslie G.},
   title={Algebraic interpretation of a theorem of Clements and
   Lindstr\"{o}m},
   journal={J. Commut. Algebra},
   volume={1},
   date={2009},
   number={3},
   pages={361--380},
   issn={1939-0807},
   review={\MR{2524858}},
   doi={10.1216/JCA-2009-1-3-361},
}

\bib{Katona}{article}{
   author={Katona, G.},
   title={A theorem of finite sets},
   conference={
      title={Theory of Graphs},
      address={Proc. Colloq., Tihany},
      date={1966},
   },
   book={
      publisher={Academic Press, New York-London},
   },
   date={1968},
   pages={187--207},
   review={\MR{0290982}},
}

\bib{Kruskal}{article}{
   author={Kruskal, Joseph B.},
   title={The number of simplices in a complex},
   conference={
      title={Mathematical optimization techniques},
   },
   book={
      publisher={Univ. California Press, Berkeley-Los Angeles, Calif.},
   },
   date={1963},
   pages={251--278},
   review={\MR{0154827}},
}

\bib{Macaulay}{article}{
   author={MacAulay, F. S.},
   title={Some Properties of Enumeration in the Theory of Modular Systems},
   journal={Proc. London Math. Soc. (2)},
   volume={26},
   date={1927},
   pages={531--555},
   issn={0024-6115},
   review={\MR{1576950}},
   doi={10.1112/plms/s2-26.1.531},
}

\bib{Peeva}{book}{
   author={Peeva, Irena},
   title={Graded syzygies},
   series={Algebra and Applications},
   volume={14},
   publisher={Springer-Verlag London, Ltd., London},
   date={2011},
   pages={xii+302},
   isbn={978-0-85729-176-9},
   review={\MR{2560561}},
   doi={10.1007/978-0-85729-177-6},
}

\end{biblist}
\end{bibdiv}

\end{document}